\documentclass[11pt]{article}

\usepackage[T1]{fontenc}
\usepackage[utf8]{inputenc}

\usepackage{fullpage}

\usepackage{amsmath, amsthm, amssymb, mathtools, mathrsfs}
\numberwithin{equation}{section}
\usepackage{hyperref}
\usepackage{xcolor}

\usepackage{titlesec}
\renewcommand{\thesection}{\arabic{section}}
\renewcommand{\thesubsection}{\thesection.\arabic{subsection}}
\titleformat{\section}{\normalfont\Large\bfseries}{\thesection.}{0.5em}{}
\titleformat{\subsection}{\normalfont\large\bfseries}{\thesubsection.}{0.5em}{}
\titleformat{\subsubsection}{\normalfont\normalsize\bfseries}{\thesubsubsection.}{0.5em}{}

\DeclareMathOperator{\supp}{supp}
\DeclareMathOperator{\inj}{inj}

\newtheorem{thmcounter}{dummy}

\newtheorem{proposition}[thmcounter]{Proposition}

\makeatletter
\def\th@plain{%
  \thm@notefont{\bfseries}%
  \itshape
  \thm@headpunct{.}%
}
\def\th@definition{%
  \thm@notefont{\bfseries}%
  \normalfont
  \thm@headpunct{.}%
}
\makeatother

\theoremstyle{plain}
\newtheorem{Theorem}[thmcounter]{Theorem}

\theoremstyle{definition}
\newtheorem{Definition}[thmcounter]{Definition}
\newtheorem{Remark}[thmcounter]{Remark}

\usepackage{graphicx}
\usepackage{mathrsfs}
\usepackage[nottoc]{tocbibind}

\usepackage{graphicx}
\usepackage{mathrsfs}
\usepackage{amsmath, amssymb}

\usepackage{abstract}

\title{\vspace{-1cm} 
\textbf{Compactness of Conformal Metrics with $L^p$-Bounded Q-Curvature on Closed Smooth Riemannian Manifolds}
}

\author{%
Zeinab Mcheik\thanks{PhD student, Jean Leray Mathematics Laboratory, Université de Nantes, Nantes, France; and Laboratory of Analysis and Applied Mathematics (LAMA), Paris, France. Email: \texttt{zeinab.mcheik@univ-nantes.fr}}%
}

\date{} 

\begin{document}

\maketitle

 \vspace{4cm}   

\begin{abstract}
Let \((M^n,g)\) be a smooth closed Riemannian manifold of dimension \(n \ge 5\) with positive Yamabe invariant and semi-positive \(Q\)-curvature. We establish a precompactness result in the \(C^{\alpha}\)-Hölder topologie on the space of Riemannian metrics, for some \(\alpha>0\),  for the set of  metrics \(\tilde{g}\) conformal to \(g\), with volume equal to that of the standard sphere $\mathbb{S}^n$, whose \(Q\)-curvature is nonnegative and uniformly bounded in \(L^p(M,\tilde{g})\) for some \(p > \frac{n}{4}\), and whose first positive eigenvalue of the Laplace-Beltrami operator satisfies \( \lambda_1(M,\tilde{g}) \ge n + \frac{1}{\Lambda} \)
for some positive constant \(\Lambda\).
\end{abstract}
\medskip

\textbf{Keywords:} Conformal geometry, \(Q\)-curvature, Paneitz operator, Compactness, Blow-up analysis, Higher-order elliptic equations, Sobolev regularity.

\newpage

\maketitle
\tableofcontents
\newpage

\section{Introduction}

~~ Let $(M^n,g)$ be a smooth, closed Riemannian manifold. In dimension $n \ge 5$, the natural fourth-order analog of the scalar curvature is the \emph{$Q$-curvature}, introduced by Branson~\cite{zbMATH03960440}. It is defined by \[ Q_g = -a_n \Delta_g S_g + b_n S_g^2 - c_n |{\rm Ric}_g|^2, \] where $S_g$ denotes the scalar curvature, ${\rm Ric}_g$ the Ricci tensor, and $a_n,b_n,c_n$ positive constants depending only on the dimension $n$. Associated with the \(Q\)-curvature is the \emph{Paneitz operator} \cite{paneitz2008quartic}, a fourth-order conformally covariant differential operator defined by \[ P_g u = \Delta_g^2 u + \operatorname{div}_g\!\big( a_n' S_g\, g - b_n' \operatorname{Ric}_g \big)(\nabla u,\cdot) + \frac{n-4}{2}\, Q_g\, u, \] where \(a_n', b_n' \) are positive constants depending only on the dimension \(n\). For a smooth function \(\varphi\) and a smooth positive function \(u\), the Paneitz operator and the \(Q\)-curvature transform under the conformal change of the metric \(g_u = u^{\frac{4}{n-4}} g\) according to \begin{align}
\label{pan_pde}
P_g (u \varphi) &= P_{g_u}(\varphi)\, u^{\frac{n+4}{n-4}},\\[2mm]
\label{paneitz_eq}
P_g u &= \frac{n-4}{2}\, Q_{g_u}\, u^{\frac{n+4}{n-4}}.
\end{align} Under suitable positivity assumptions on the scalar curvature and the $Q$-curvature, Gursky and Malchiodi \cite{gursky2015strong} established the existence of a smooth metric in the conformal class of $g$ with \emph{constant positive $Q$-curvature}. More precisely, if
\[
S_g > 0, \quad Q_g \ge 0 \quad \text{ and } ~ Q_g > 0 \text{ somewhere},
\]
then there exists a smooth positive function $u$ solving the positive constant $Q$-curvature equation
\begin{equation}\label{p123}
    P_g u = \lambda\, u^{\frac{n+4}{n-4}}, \qquad \lambda\in \mathbb{R}^{*}_+,
\end{equation}
so that the metric $g_u=u^\frac{4}{n-4}g$ has a positive constant Q-curvature, and $\lambda$ is  determined by
\[\lambda=\frac{n-4}{2} Q_{g_u}.
\]
In this paper, we study the compactness of conformal metrics of the form
\(
g_u := u^{\frac{4}{n-4}}\, g,\)
assuming that, for some \(p > \frac{n}{4}\), there exists a positive constant \(C\), independent of \(u\), such that
\[
\|Q_{g_u}\|_{L^p(M,g_u)} \le C.
\]
The precise assumptions on \(u\) are detailed in Theorem~\ref{theorem 1}.

\medskip

Because of the conformal invariance of the problem, it is instructive to recall the classical second-order Yamabe problem~\cite{zbMATH04030435}, which exhibits closely related phenomena. On a closed Riemannian manifold $(M^n,g)$ with $n \ge 3$, the problem of prescribing constant scalar curvature—viewed as a higher-dimensional generalization of the Uniformization Theorem~\cite{zbMATH05835819}—amounts to finding a metric $h$ conformal to $g$ whose scalar curvature $S_h$ is constant. By conformal invariance, this problem reduces to finding a smooth positive function $u$ satisfying the constant scalar curvature equation
\begin{equation}\label{yam}
L_g u = \beta\, u^{\frac{n+2}{n-2}}, \quad \beta \in \mathbb{R},
\end{equation}
where \( L_g = -4\,\frac{n-1}{n-2}\,\Delta_g + S_g \) denotes the Yamabe operator, and $S_g$ is the scalar curvature of $(M,g)$. Then the metric $u^{\frac{4}{n-2}} g$ has a constant scalar curvature equal to $\beta$.

\medskip

To study the compactness of the family of solutions to the Yamabe equation~\eqref{yam}, we rely on the work of Khuri, Marques, and Schoen~\cite{zbMATH05503073}, who proved that on a closed Riemannian manifold \((M^n,g)\) of dimension \(3 \le n \le 24\),  if \(M\) is not conformally diffeomorphic to the standard sphere \(\mathbb{S}^n\) and that the Yamabe invariant of $(M^n,g)$ is positive, then the set of all smooth positive solutions \(u\) of~\eqref{yam} is relatively compact in \(C^2(M)\). Similar results in lower dimensions were obtained by Druet~\cite{zbMATH02207680} for \(n \le 5\), by Marques~\cite{zbMATH05033801} and Li–Zhang~\cite{zbMATH02220860} for \(n \le 7\), and by Li–Zhang~\cite{zbMATH05161231} for \(n \le 11\). In higher dimensions, compactness may fail: Brendle~\cite{zbMATH05817575} constructed counterexamples for \(n \ge 52\), and Brendle and Marques~\cite{zbMATH05522838} extended these examples to \(25 \le n \le 51\).

\medskip

In the study of compactness for families of conformal metrics of the form \(g_u = u^{\frac{4}{n-2}} g,\) without assuming that the scalar curvature \(S_{g_u}\) is constant, Gursky~\cite{zbMATH00558966} introduced a \emph{mass concentration criterion} that controls the number of bubbles and plays a fundamental role in compactness results modulo bubbling. More precisely, let \((M^n,g)\) be a smooth closed Riemannian manifold of dimension \(n \ge 3\). Assume that \(M\) is not conformally equivalent to the standard sphere, that the Riemann curvature tensor of the metrics \(g_u\) is uniformly bounded in  \(L^p(M,g_u)\)  for some \(p>n/2\), and that the volumes of the metrics \(g_u\) are uniformly controlled, i.e., there exists a constant \(C>0\) independent of \(u\) such that
\(C^{-1} \le \mathrm{Vol}(M,g_u) \le C.\) Under these assumptions, Gursky proved that the family \(\{g_u\}\) is precompact. More recently, Matthiesen~\cite{zbMATH06774840} obtained a related compactness result under \emph{weaker assumptions}. More precisely, he considered families of conformal metrics \(g_u = u^{\frac{4}{n-2}} g\) such that the scalar curvature is uniformly bounded in \(L^p(M,g_u)\) for some \(p > \frac{n}{2}\), the first positive eigenvalue of the Laplacian associated with \(g_u\) satisfies \(\lambda_1(M,g_u) \ge n + \frac{1}{\Lambda}\) for some constant \(\Lambda > 0\), and that the volume of the manifold \(( M,g_u)\) is equal to that of the standard sphere $\mathbb{S}^n$, without any control on the full curvature tensor. A key tool in Matthiesen's compactness results is the \emph{variational characterization of the first positive eigenvalue} of the Beltrami-Laplacian \(\lambda_1(M,g)\) on a closed manifold \(M\):
\begin{equation}\label{rlq}
    \lambda_1(M,g) = \inf_{\substack{u \in H^1(M)\setminus \{0\} \\ \int_M u\, dV_g = 0}} 
\frac{\displaystyle \int_M |\nabla u|_g^2 \, dV_g}{\displaystyle \int_M u^2 \, dV_g}.
\end{equation}
 By Cheeger’s inequality \cite{zbMATH03337135}, \(\lambda_1(M,g)\) measures the connectivity of \((M,g)\). A positive lower bound on \(\lambda_1(M,g)\) prevents the manifold from splitting into two large regions connected by a narrow tube, and in particular rules out the formation of two distinct blow-up points.
\medskip
 
 In the Paneitz setting, a similar compactness question arises for conformal metrics of the form $g_u = u^{\frac{4}{n-4}} g$ . Assume that $(M^n,g)$ is not conformally equivalent to the standard sphere and that
\[
S_g \ge 0 \quad \text{and} \quad Q_g \ge 0, \qquad \text{with } Q_g>0 \text{ somewhere on } M.
\]
Under these assumptions, Gang Li~\cite{Gangli} proved that, in dimensions $5 \le n \le 7$, the set of metrics conformal to $g$ with a fixed positive constant $Q$-curvature is relatively compact in $C^{4,\alpha}(M)$ for any $0<\alpha<1$. The proof relies on a refined blow-up analysis combined with Pohozaev-type identities, which rule out bubbling phenomena except in the spherical case. More general compactness results for conformal metrics with constant $Q$-curvature, under additional geometric assumptions, were later obtained by Y. Li and J. Xiong~\cite{yanyanli}.

\medskip

 Our first result establishes a compactness property for the conformal factors $u$ associated with metrics of the form $g_u = u^{\frac{4}{n-4}} g$. This result is obtained under the assumptions that the $Q$-curvature $Q_{g_u}$ is uniformly bounded in $L^p(M,g_u)$ for some $p > \frac{n}{4}$, that the first positive eigenvalue $\lambda_1(M,g_u)$ of the Laplace-Beltrami operator associated with $g_u$ admits a uniform positive lower bound, and that the volume of $g_u$ is fixed and positive. More precisely, let \((M^n,g)\) be a smooth closed Riemannian manifold of dimension \(n \ge 5\). For \(p > \frac{n}{4}\) and \(\Lambda > 0\), we define the class of functions \(\mathcal{E}_{\Lambda,p}\) by
\begin{equation}\label{function class}
\mathcal{E}_{\Lambda,p} 
:= \Bigl\{
u \in C^\infty(M),\ u>0,\ 
\int_M u^{\frac{2n}{n-4}}\, dV_g = \sigma_n, \ 
\left\| Q_{u^{\frac{4}{n-4}} g} \right\|_{L^p(M,u^{\frac{4}{n-4}} g)} \le \Lambda, \ 
\lambda_1\Bigl(M,{u^{\frac{4}{n-4}} g}\Bigr) \ge n + \frac{1}{\Lambda} 
\Bigr\},
\end{equation}
where \(\sigma_n\) denotes the volume of the unit Euclidean sphere \(\mathbb{S}^n\).

\begin{Theorem}[Compactness of conformal factors]\label{theorem 1}
The family \(\mathcal{E}_{\Lambda,p}\) is relatively compact in \(C^\alpha(M)\) for any $\alpha$ satisfying
\[
0 < \alpha < \min\Bigl\{1, 4-\frac{n}{p}\Bigr\}.
\]
\end{Theorem}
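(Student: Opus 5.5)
The plan is to adapt Matthiesen's scheme to the fourth-order equation $P_g u = \frac{n-4}{2} Q_{g_u} u^{\frac{n+4}{n-4}}$. The argument has three parts: uniform a priori bounds away from concentration points, a blow-up analysis showing that loss of compactness produces a standard bubble, and the eigenvalue condition to rule out that bubble.

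\textbf{Step 1 ($\varepsilon$-regularity).} Write $V = \frac{n-4}{2}Q_{g_u}u^{\frac{8}{n-4}}$, so that $P_g u = V u$. The constraints give
\[
\int_M |V|^p u^{\frac{2n}{n-4}-\frac{8p}{n-4}}\,dV_g \le C,
\]
and in particular $Q_{g_u}u^{\frac{n+4}{n-4}}\in L^{q}(M,g)$ with $q$ chosen so that the weight matches. On a ball $B_r(x)$ where the volume $\int_{B_r} u^{\frac{2n}{n-4}}\,dV_g$ is smaller than some $\varepsilon_0$, a Moser-type iteration for $P_g$ gives $\|u\|_{L^\infty(B_{r/2})}\le C$. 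Here we use positivity of the Green's function of $P_g$, which holds under positive Yamabe invariant and $Q_g\ge 0$ by Gursky--Malchiodi, together with Hölder and $p>n/4$. Once $u$ is bounded, $Q_{g_u}\in L^p$ and $W^{4,p}$ elliptic estimates give a bound on $u$ in $C^{\alpha}$ for every $\alpha<\min\{1,4-n/p\}$.

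We also need a lower bound on $u$. Since $Q_{g_u}\ge0$, the Green representation $u(x)=\int G(x,y)\,\frac{n-4}{2}Q u^{\frac{n+4}{n-4}}\,dV_g(y)$ combined with Harnack-type comparisons should give $\inf u>0$ locally uniformly. This is needed because $C^\alpha$ convergence of the conformal factors should come with the limit staying positive.

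\textbf{Step 2 (blow-up).} Suppose compactness fails. Then there is a sequence $u_k$ and points $x_k$ where volume concentrates, i.e. $\int_{B_{r_k}(x_k)}u_k^{\frac{2n}{n-4}}\ge\varepsilon_0$ with $r_k\to0$. Rescale by $\tilde u_k(y)=r_k^{\frac{n-4}{2}}u_k(\exp_{x_k}(r_k y))$. The metrics $\tilde u_k^{4/(n-4)}g_k$ with $g_k\to\delta$ have the following properties: $\|Q\|_{L^p}$ scales like $r_k^{4-n/p}\to0$, the volume is preserved, and $\lambda_1$ is scale invariant.

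Applying Step 1 on $\mathbb R^n$, we obtain a limit $v\ge0$ solving $\Delta^2 v=0$ with nontrivial mass. This is a contradiction unless the mass escapes to infinity, that is, unless concentration happens at scales in which the limit is a nontrivial finite-volume metric. The key point is to choose $r_k$ as the concentration radius and to track the measures $u_k^{\frac{2n}{n-4}}dV_g$ as a measure-valued limit on $M$. This measure splits into a finite number of atoms plus an absolutely continuous part.

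\textbf{Step 3 (the eigenvalue excludes atoms).} This is the main obstacle. Following Matthiesen, we use the variational characterization (\ref{rlq}) with test functions built from conformal coordinates centered at an atom. If the limiting measure has an atom of mass $m>0$, the test functions are obtained by pulling back coordinate functions of $\mathbb S^n$ through a stereographic-type dilation concentrating at that point, renormalized Hersch-style so that $\int x_i\,dV_{g_{u_k}}=0$. Conformal invariance of the Dirichlet energy fails in dimension $n>2$, so instead we estimate $\int|\nabla x_i|^2 dV_{g_{u_k}}$ via Hölder against $\int u^{\frac{2n}{n-4}}$ and the structure of the concentrating profile.

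The aim is to show that $\lambda_1(g_{u_k})\le n+o(1)$. Heuristically, if the total volume $\sigma_n$ concentrates, the limit is a round sphere for which $\lambda_1=n$. If only part of the volume concentrates, the manifold splits, and a function equal to $\pm$ constants on the two pieces gives $\lambda_1\to0$. Either way this contradicts $\lambda_1\ge n+1/\Lambda$.

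The hard step is identifying the profile: showing that an atom carrying the full volume must be asymptotically a round sphere. For that we would use the rescaled limit solving $P_{\mathbb R^n}v=\frac{n-4}{2}Qv^{\frac{n+4}{n-4}}$ with $Q\ge0$, together with the volume normalization $\sigma_n$ and the Sobolev inequality for $\Delta^2$. Equality forces the standard bubble.

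Having excluded all atoms, Step 1 applies uniformly on $M$. Arzelà--Ascoli then yields relative compactness in $C^\alpha$.
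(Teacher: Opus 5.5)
\textbf{There is a genuine gap.} Your outline has the right overall shape: rule out blow-up with the spectral bound, then use $W^{4,p}$ estimates and a compact embedding. But the step you yourself flag as the hard one cannot be carried out, and the reasoning that leads to it contains a scaling error.

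\emph{The scaling claim in Step 2 is false.} With $\tilde u_k(y)=r_k^{\frac{n-4}{2}}u_k(\exp_{x_k}(r_k y))$ and $g_k=r_k^{-2}(\exp_{x_k}(r_k\,\cdot))^*g$, one has $\tilde u_k^{\frac{4}{n-4}}g_k=(\exp_{x_k}(r_k\,\cdot))^*g_{u_k}$. So the rescaled conformal metric is isometric to $g_{u_k}$, and its volume, Rayleigh quotients and $\|Q\|_{L^p}$ are all unchanged. The factor $r_k^{4-n/p}$ describes a genuine dilation of $g_{u_k}$, which is incompatible with the volume preservation you also assert. Hence the limit does not solve $\Delta^2 v=0$; it only solves $\Delta^2 v=Vv$ with $V\in L^p_{\mathrm{loc}}$ (compare Remark~\ref{rq}).

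\emph{The profile cannot be identified.} Nothing in the hypotheses ($L^p$ control of $Q$, fixed volume, $\lambda_1\ge n+1/\Lambda$) produces an energy identity placing $v$ in the equality case of the Sobolev inequality for $\Delta^2$. So ``equality forces the standard bubble'' has no mechanism behind it, and local $L^p$ control of $Q$ is compatible with many non-round profiles.

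\emph{Step 1 uses hypotheses Theorem~\ref{theorem 1} does not have.} It invokes $Q_{g_u}\ge 0$ and the Gursky--Malchiodi Green's function, which belong to Theorem~\ref{theorem 2}. A positive lower bound on $u$ is also not needed for compactness of the factors in $C^\alpha(M)$. The $\varepsilon$-regularity itself can be done without these: small volume on a ball makes $\|Q_{g_u}u^{\frac{8}{n-4}}\|_{L^{n/4}}$ small there, by H\"older against $\int|Q_{g_u}|^p u^{\frac{2n}{n-4}}$.

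\textbf{The missing idea.} No identification of the profile is needed, because the Hersch argument does not depend on the profile. Take any nonnegative continuous $w$ on $\mathbb R^n$ with $\int w^{\frac{2n}{n-4}}\,dy=\sigma_n$. Balance a conformal map $\psi:\mathbb R^n\to\mathbb S^n$ against $w^{\frac{2n}{n-4}}dy$ and test with log-cut-off versions of $x^i\circ\psi$. Writing $\psi^*\bar g=f^2\,w^{\frac{4}{n-4}}\delta$, one has $\sum_i|\nabla(x^i\circ\psi)|^2_{w^{4/(n-4)}\delta}=nf^2$. H\"older together with $\int f^n\,dV_{w^{4/(n-4)}\delta}\le\sigma_n$ then bounds the summed numerators by $n\sigma_n$, so the weighted Rayleigh infimum is at most $n$ (Proposition~\ref{proposition 1}). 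You mention both H\"older and Hersch balancing, but you pair them with ``the structure of the concentrating profile''. The point is that the $n$-energy is conformally invariant, so the estimate holds whatever the profile is.

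\textbf{How the paper's proof runs.} With this bound, the argument is shorter than your scheme and needs neither $\varepsilon$-regularity nor an atom decomposition.
\begin{enumerate}
\item Rescale at a maximum point with $\mu_k=\|u_k\|_\infty^{-2/(n-4)}$, so that $v_k\le 1=v_k(0)$.
\item By scale invariance and $v_k\le1$, $\|P_{h_k}v_k\|_{L^p(B_R)}$ is bounded in terms of $\Lambda$ alone.
\item Extract $v_k\to v_\infty$ in $C^\alpha_{\mathrm{loc}}$ with $v_\infty(0)=1$.
\item Use the log-cut-off volume inequality (Proposition~\ref{prop:volume-inequality}, which is your ``splitting'' case) to get $\int v_\infty^{\frac{2n}{n-4}}=\sigma_n$.
\item Pass $\lambda_1\ge n+1/\Lambda$ to the limit (Proposition~\ref{contr2}), which contradicts the Hersch bound.
\end{enumerate}
The resulting uniform $L^\infty$ bound gives a $W^{4,p}(M)$ bound, hence compactness in $C^\alpha(M)$.
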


\medskip

Let $u$ be a function in $\mathcal{E}_{\Lambda,p}$, and denote by $g_u = u^{\frac{4}{n-4}} g$ the corresponding conformal metric on M.   Throughout this work, we impose volume normalization
\begin{equation}\label{volume_cndt}
   \int_M u^{\frac{2n}{n-4}} \, dV_g = \sigma_n .
\end{equation}
More generally, one may relax condition~\eqref{volume_cndt} and only assume the existence of a positive constant $C$, independent of $u$, such that
\[
C^{-1} \le \int_M u^{\frac{2n}{n-4}} \, dV_g \le C.
\]
In this case, the assumption on the first positive eigenvalue of the Laplacian must be replaced by
\begin{equation*}\label{lambda1_vol_cndt}
\lambda_1(M,g_u) \, \mathrm{Vol}(M,g_u)^{\frac{2}{n}} \ge (n+\frac{1}{\Lambda})\, \sigma_n^{\frac{2}{n}}.
\end{equation*}
 Imposing the stricter normalization~\eqref{volume_cndt} simplifies several notations and will be adopted throughout the paper.

\medskip

The main idea underlying Theorem~\ref{theorem 1} is that, in view of the defining properties of the set $\mathcal{E}_{\Lambda,p}$, sequences in this class cannot blow-up.
More precisely, for any sequence $(u_k)_k \subset \mathcal{E}_{\Lambda,p}$, there exists a positive constant $C$, independent of $k$, such that
\begin{equation}\label{inf1}
\|u_k\|_{L^{\infty}(M)} \le C.
\end{equation}
The proof of~\eqref{inf1} proceeds by contradiction. Assuming the existence of a blow-up sequence, $\Vert u_k\Vert \rightarrow +\infty$, we select a sequence of concentration points $x_k \in M$ such that
\[
u_k(x_k)=\|u_k\|_{L^\infty(M)}.
\]
   Performing a suitable rescaling around the points $x_k$, one constructs a sequence of functions $(v_k)_k$ defined on $\mathbb{R}^n$ that satisfy uniform bounds in $W^{4,p}_{\mathrm{loc}}(\mathbb{R}^n)$ (see Proposition \ref{prop5}). Up to extracting a subsequence, $(v_k)_k$ converges in $W^{4,p}_{\mathrm{loc}}(\mathbb{R}^n)$ to a nonnegative limiting function $v_\infty$. Let $\operatorname{inj}(M)$ denote the injectivity radius of $(M,g)$. Let $h_k$ be the rescaled metrics around $x_k \in M$, defined by
\[
h_k(y) := (\exp_{x_k}^* g)(\mu_k y), \qquad y \in B(0,r) \subset \mathbb{R}^n,
\]
where $\mu_k = \|u_k\|_{\infty}^{-\frac{2}{n-4}}$ is the positive scaling factor, $r>0$ is fixed, and $k$ is sufficiently large so that $\mu_k r \leq \operatorname{inj}(M)$. Then, one has the weak convergence of measures
\[
v_k^{\frac{2n}{n-4}} \, dV_{h_k} \;\rightharpoonup\; v_\infty^{\frac{2n}{n-4}} \, dy ,
\]
which, in general, does not exclude the possibility of mass loss at infinity. However, the existence of a positive constant $C$, independent of $k$, such that
\[
\lambda_1(M,g_{u_k}) \ge C, \qquad g_{u_k}= u_k^{\frac{4}{n-4}}g,
\]
 prevents any splitting of mass on the manifold, since such a splitting would force the first  positive eigenvalue to approach zero. As a consequence, no mass can be lost at infinity in the blow-up limit, and the total volume is entirely captured by the limiting profile $v_\infty$. More precisely, one obtains the volume preservation property (see Proposition~\ref{contr1})
\begin{equation}\label{volrese}
\int_{\mathbb{R}^n} v_\infty^{\frac{2n}{n-4}} \, dy = \sigma_n.
\end{equation}

Once the volume preservation property \eqref{volrese} is established, the limiting measure $v_\infty^{\frac{2n}{n-4}}\,dy$ defines a finite measure on $\mathbb{R}^n$ whose total mass coincides with that of the standard sphere. Hence, the Hersch-type estimate (see Lemma~2.5 in~\cite{zbMATH06774840} or property (2.4) in \cite{zbMATH00027724}) yields the uniform upper bound
\begin{equation}\label{hersh}
\inf_{\substack{\phi \in C_c^\infty(\mathbb{R}^n) \\[1mm]
\int_{\mathbb{R}^n} \phi\, v_\infty^{\frac{2n}{n-4}} \, dy = 0}}
\frac{\displaystyle \int_{\mathbb{R}^n} v_\infty^{\frac{2(n-2)}{n-4}} |\nabla \phi|^2 \, dy}
{\displaystyle \int_{\mathbb{R}^n} v_\infty^{\frac{2n}{n-4}} \phi^2 \, dy}
\le n,
\end{equation}
see also Proposition~\ref{proposition 1}. On the other hand,  since the sequence $(u_k)_k$ belongs to the class $\mathcal{E}_{\Lambda,p}$, the associated conformal metrics satisfy
\[
\lambda_1(M,{{u_k^\frac{4}{n-4}}}g) \ge n+\frac{1}{\Lambda}.
\]
Then, passing to the limit in Rayleigh's quotient \eqref{rlq} yields
\begin{equation}\label{rayleigh}
\inf_{\substack{\phi \in C_c^\infty(\mathbb{R}^n) \\[1mm]
\int_{\mathbb{R}^n} \phi\, v_\infty^{\frac{2n}{n-4}} \, dy = 0}}
\frac{\displaystyle \int_{\mathbb{R}^n} v_\infty^{\frac{2(n-2)}{n-4}} |\nabla \phi|^2 \, dy}
{\displaystyle \int_{\mathbb{R}^n} v_\infty^{\frac{2n}{n-4}} \phi^2 \, dy}
\ge n + \frac{1}{\Lambda},
\end{equation}
which contradicts \eqref{hersh}. This contradiction rules out the occurrence of blow-up and establishes the desired uniform $L^\infty$ bound \eqref{inf1}. Further details of the blow-up analysis are provided in Section~\ref{blup}.

\medskip

Our second result concerns the convergence of the sequence of conformal metrics $(u_k^{\frac{4}{n-4}}g)_k$, where $(u_k)_k$ is a sequence in $\mathcal{E}_{\Lambda,p}$. When $(u_k)_k$ converges, it is essential to ensure that the limit remains strictly positive on $M$, so that it defines a Riemannian metric. In the classical Yamabe problem, the Harnack  inequality \cite{trudinger1968remarks} guaranties that an uniform upper bound on the conformal factor \(u\) also provides a uniform positive lower bound. However, this is no longer the case in our setting, since the Paneitz operator is a fourth-order elliptic operator, rather than a second-order one. This difficulty motivates the introduction of additional geometric assumptions, namely the positivity of the scalar curvature and the semi-positivity of the \(Q\)-curvature, which, thanks to Gursky and Malchiodi~\cite{gursky2015strong}, allows us to establish a positive uniform lower bound for the function $u$ in $\mathcal{E}_{ \Lambda,p}$ and leads to the following result.

\medskip

\begin{Theorem}[Compactness of conformal metrics]\label{theorem 2}
Let $(M^n,g)$ be a smooth closed Riemannian manifold of dimension $n \ge 5$, and assume that the scalar curvature and the $Q$-curvature satisfy
\[
S_g \ge 0 \quad \text{and} \quad Q_g \ge 0 \quad \text{with } Q_g > 0 \text{ somewhere on } M.
\]

Fix $p > \frac{n}{4}$ and $\Lambda > 0$. Let $\mathcal{E}_{\Lambda,p}$ be the class of functions defined in \eqref{function class}, and define the corresponding set of conformal metrics $E_{\Lambda,p}$ by
\begin{equation}\label{def:E_Lambda_p}
E_{\Lambda,p} := \bigl\{\, g_u = u^\frac{4}{n-4} g \ :\ u \in \mathcal{E}_{\Lambda,p} \text{ and } P_g(u) \ge 0 \,\bigr\}.
\end{equation}

For any $\alpha \in (0,1)$, we also define the Hölder space of conformal metrics
\[
\mathscr{C}^{\alpha}(M) := \bigl\{\, g_u = u g \ :\ u \text{ is a positive function on } M \text{ with } u \in C^{\alpha}(M) \,\bigr\}.
\]

Then $E_{\Lambda,p}$ is relatively compact in $\mathscr{C}^{\alpha}(M)$ for any $\alpha$ satisfying
\[
0 < \alpha < \min\Bigl\{1,\, 4 - \frac{n}{p}\Bigr\}.
\]
\end{Theorem}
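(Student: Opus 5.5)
Theorem~\ref{theorem 2} follows from Theorem~\ref{theorem 1} once we add a uniform positive lower bound $u \ge c_0>0$ on $\mathcal{E}_{\Lambda,p}\cap\{P_g u\ge 0\}$. Note that $\mathscr{C}^\alpha$ is parametrized by the conformal factor $w=u^{\frac{4}{n-4}}$, so compactness of metrics means compactness of $u^{\frac4{n-4}}$ in $C^\alpha$ with positive limit.

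\textbf{Step 1: upper bound and compactness of $u$.} Let $(u_k)_k$ be a sequence in $\mathcal{E}_{\Lambda,p}$ with $P_g u_k\ge 0$. Theorem~\ref{theorem 1} gives a subsequence with $u_k\to u_\infty$ in $C^\alpha(M)$ and $\|u_k\|_\infty\le C$, where $C$ is uniform by \eqref{inf1}.

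\textbf{Step 2: the Green function.} Under $S_g\ge 0$ and $Q_g\ge0$ with $Q_g>0$ somewhere, Gursky--Malchiodi~\cite{gursky2015strong} show that $P_g$ is invertible and its Green function $G_g$ is strictly positive on $M\times M\setminus\mathrm{diag}$. Since $M$ is compact and $G_g(x,y)\sim c_n d_g(x,y)^{4-n}$ near the diagonal, we get $G_g\ge \gamma>0$ everywhere for some constant $\gamma$.

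\textbf{Step 3: representation and lower bound.} Set $f_k:=P_g u_k=\frac{n-4}{2}Q_{g_{u_k}}u_k^{\frac{n+4}{n-4}}\ge 0$. The representation formula gives
\[
u_k(x)=\int_M G_g(x,y)f_k(y)\,dV_g(y)\ \ge\ \gamma\int_M f_k\,dV_g .
\]
So it suffices to bound $\int_M f_k\,dV_g$ from below uniformly. Integrating the equation against $1$, or more precisely pairing with $P_g 1=\frac{n-4}{2}Q_g$ by self-adjointness, gives
\[
\int_M f_k\,dV_g=\frac{n-4}{2}\int_M Q_g\,u_k\,dV_g .
\]
It therefore remains to rule out $u_\infty\equiv 0$ on $\{Q_g>0\}$. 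The volume normalization and uniform convergence give $\int_M u_\infty^{\frac{2n}{n-4}}\,dV_g=\sigma_n$, so $u_\infty\not\equiv0$. Positivity of $Q_g$ on some open set is still needed to conclude. For this, pass to the limit in the representation formula: since $u_k\le C$, $\|Q_{g_{u_k}}\|_{L^p(g_{u_k})}\le\Lambda$ and $p>n/4$, the $f_k$ are bounded in $L^{p}(M,g)$. Hence $f_k\rightharpoonup f_\infty\ge0$ weakly, and
\[
u_\infty=\int_M G_g(\cdot,y)f_\infty(y)\,dV_g(y).
\]
Because $u_\infty\not\equiv 0$, we get $f_\infty\not\equiv 0$, and then $u_\infty\ge \gamma\int_M f_\infty\,dV_g>0$ everywhere. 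Uniform convergence then yields $u_k\ge c_0>0$ for all large $k$.

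\textbf{Step 4: conclusion.} With $c_0\le u_k\le C$, the map $t\mapsto t^{\frac4{n-4}}$ is smooth on $[c_0,C]$. Hence $u_k^{\frac4{n-4}}\to u_\infty^{\frac4{n-4}}>0$ in $C^\alpha(M)$, and the limit metric lies in $\mathscr{C}^\alpha(M)$.

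The main obstacle is Step~2: establishing the strict positivity and the uniform lower bound of $G_g$ under the weak hypothesis $S_g\ge0$ rather than $S_g>0$. I would first invoke Gursky--Malchiodi's strong maximum principle directly. If that is not available in this generality, I would perturb conformally to a metric with $S>0$ and $Q\ge0$, using the conformal covariance \eqref{pan_pde} to transfer the Green function positivity back to $g$.
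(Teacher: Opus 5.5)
Your proof is correct, but the key step, a uniform positive lower bound for $\int_M P_g u\,dV_g$, follows a different route from the paper's.

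The paper argues quantitatively, one function at a time. The Gursky--Malchiodi coercivity estimate $\int_M u P_g u\,dV_g \ge C\int_M u^2\,dV_g$, combined with $\int_M u^2\,dV_g \ge \|u\|_{L^\infty}^{-\frac{8}{n-4}}\sigma_n$, gives $\int_M uP_gu\,dV_g\ge C$. Since $P_gu\ge 0$, one then has $\int_M P_gu\,dV_g \ge \|u\|_{L^\infty}^{-1}\int_M uP_gu\,dV_g$. The bound $G_P\ge C$, quoted from Gang Li, then yields $\inf_M u\ge C$ on the whole class, using only the $L^\infty$ bound from the blow-up analysis.

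You instead argue along a sequence. You take a weak $L^p$ limit $f_\infty\ge 0$ of $P_gu_k$ and identify $u_\infty$ as $\int_M G(\cdot,y)f_\infty(y)\,dV_g(y)$. You obtain $f_\infty\not\equiv 0$ from the volume normalization, which survives in the limit because of the uniform convergence given by Theorem~\ref{theorem 1}. This dispenses with the coercivity of $P_g$: only invertibility and $G\ge\gamma>0$ are used. The cost is a non-explicit constant, and you need $C^0$-compactness rather than just the $L^\infty$ bound.

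Points to tidy:
\begin{itemize}
\item Justify the passage to the limit in the representation formula. For example, note that $G(x,\cdot)\in L^{p'}(M)$, which holds exactly because $p>\frac n4$; alternatively, pass to the limit in $P_gu_k=f_k$ distributionally.
\item The detour through $\int_M f_k\,dV_g=\frac{n-4}{2}\int_M Q_g u_k\,dV_g$ and the positivity of $Q_g$ on an open set is never used and can be dropped.
\item Your worry about Step~2 is unfounded. Gursky--Malchiodi work under $S_g\ge 0$, so no perturbation argument is needed.
\end{itemize}

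Your Step~4 converts $C^\alpha$ convergence of $u_k$ into $C^\alpha$ convergence of the conformal factor $u_k^{\frac{4}{n-4}}$ via the lower bound. This makes explicit a point the paper leaves implicit.
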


\medskip

The paper is organized as follows. Section~\ref{sec proof} contains the blow-up analysis and the derivation of a uniform \(L^\infty\) bound, which together yield Hölder compactness and lead to the proof of Theorem~\ref{theorem 1}. Finally, Section~\ref{sec3} is dedicated to the proof of Theorem~\ref{theorem 2} and presents a counterexample illustrating the necessity of the sign condition \(P_g(u)\geq 0\), which is part of the definition of the set \({E}_{\Lambda,p}\), in the proof of that result.

\medskip

\medskip
\subsection*{Notation}

Throughout this paper, we adopt the following notations:

\begin{itemize}
   \item The symbol $\delta$ denotes the standard Euclidean metric on $\mathbb{R}^n$. The metric induced by $\mathbb{R}^{n+1}$ on the unit sphere $\mathbb{S}^n$ is denoted by $\bar{g}$.

    \item Let $(M,g)$  be a Riemannian manifold. For any measurable subset \( A \subset M \), we denote by \( V_g(A) \) its Riemannian volume with respect to \( g \):
    \[
    V_g(A) := \int_A dV_g,
    \]
    where \( dV_g \) is the volume form associated with \( g \).

 \item \( C \) denotes a positive constant that may change from line to line and depends only on \( n \), \( M \), and \( g \).\end{itemize}

\medskip

\section*{Acknowledgement}

I would like to sincerely thank my supervisors, Gilles Carron and Paul Laurain, for their guidance, support, and many insightful discussions throughout this work. Their expertise and encouragement were invaluable to the development of this research.

Funded by the European Union. Views and opinions expressed are, however, those of the author only and do not necessarily reflect those of the European Union or the European Research Executive Agency. Neither the European Union nor the granting authority can be held responsible for them.

\section{Proof of Theorem \ref{theorem 1}}\label{sec proof}
In this section, we follow the strategy outlined in the introduction to show that any sequence of functions $(u_k)_k$ in $\mathcal{E}_{\Lambda,p}$ is uniformly bounded in $L^\infty(M)$. The rest of the proof of Theorem~\ref{theorem 1} then relies on standard Sobolev estimates.

\subsection{Spectral Estimate}\label{ltasmiyet}
Here, we introduce a variational quantity associated with a nonnegative function on \(\mathbb{R}^n.\)

\begin{Definition}\label{def}
Let $u$ be a nontrivial, nonnegative, continuous function on $\mathbb{R}^n$ such that 
\(
u \in L^{\frac{2n}{n-2}}(\mathbb{R}^n).\) We define
\[
\lambda_1(\mathbb{R}^n,g_u) := 
\inf_{\substack{v \in C_c^{\infty}(\mathbb{R}^n) \\ \int_{\mathbb{R}^n} v \, u^{\frac{2n}{n-2}}\, dy = 0}} 
\frac{\displaystyle \| u \nabla v \|_{L^2(\mathbb{R}^n)}^2}{\displaystyle \| u^{\frac{n}{n-2}} v \|_{L^2(\mathbb{R}^n)}^2}.
\]
\end{Definition}
Note that \(\lambda_1(\mathbb{R}^n,g_u) \) does not correspond to a classical eigenvalue of the Laplacian, since \(\mathbb{R}^n\) is non-compact. 
However, if \(u\) is strictly positive on \(\mathbb{R}^n\), one can consider the Friedrichs extension of the Laplacian associated with the conformal metric
\(g_u := u^{\frac{4}{n-2}} \, \delta,\) 
denoted by \(\Delta_{g_u}\). In this case, \(\lambda_1(\mathbb{R}^n,g_u)\) represents the distance between the zero eigenvalue and the rest of the spectrum of \(\Delta_{g_u}\).

\medskip

The aim of this section is to establish the following Hersch-type upper bound.

\begin{proposition}\label{proposition 1}
Let \(n \geq 3\) and \(u\) be nontrivial,   continuous, non-negative functions in \( L^{\frac{2n}{n-2}}(\mathbb{R}^n)\). Assume that \(u\) satisfies the volume normalization
\begin{equation}\label{1}
\int_{\mathbb{R}^n} u^{\frac{2n}{n-2}} \, dy = \sigma_{n}.
\end{equation}
Then the quantity \(\lambda_1(\mathbb{R}^n,g_u) \) given in Definition~\ref{def} satisfies the upper bound
\[
\lambda_1(\mathbb{R}^n,g_u) \leq n.
\]
\end{proposition}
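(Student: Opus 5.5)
The plan is to use Hersch's trick: pull the measure $\mu := u^{\frac{2n}{n-2}}\,dy$ back to the round sphere by inverse stereographic projection, balance it with a conformal transformation, and then use the coordinate functions as test functions.

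\textbf{Setup.} Let $\pi:\mathbb{S}^n\setminus\{N\}\to\mathbb{R}^n$ be stereographic projection. Write $\pi^{-1} = (x_1,\dots,x_{n+1})$ for the restrictions of the coordinate functions of $\mathbb{R}^{n+1}$ to $\mathbb{S}^n$. For a conformal diffeomorphism $\Phi$ of $\mathbb{S}^n$, consider the functions $x_i\circ\Phi\circ\pi^{-1}$ on $\mathbb{R}^n$. Because $\mu$ is a finite measure with no atoms (by continuity of $u$), the standard Hersch center-of-mass lemma applies: a degree/fixed-point argument on the closed ball, using the maps $\Phi_{\xi}$ (dilations towards $\xi\in B^{n+1}$), gives some $\Phi$ with
\[
\int_{\mathbb{R}^n} (x_i\circ\Phi\circ\pi^{-1})\, u^{\frac{2n}{n-2}}\,dy = 0, \qquad i=1,\dots,n+1.
\]
Set $\psi_i := x_i\circ\Phi\circ\pi^{-1}$. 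These are smooth and bounded, and $\sum_i\psi_i^2 = 1$.

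\textbf{Key pointwise inequality.} The map $F=\Phi\circ\pi^{-1}:(\mathbb{R}^n,\delta)\to(\mathbb{S}^n,\bar g)$ is conformal, $F^*\bar g = w^{\frac{4}{n-2}}\delta$ with $\int_{\mathbb{R}^n} w^{\frac{2n}{n-2}}\,dy=\sigma_n$. Since the coordinate functions on $\mathbb{S}^n$ satisfy $\sum_i|\nabla x_i|_{\bar g}^2 = n$, we get $\sum_i|\nabla\psi_i|^2 = n\,w^{\frac{4}{n-2}}$. Hence
\[
\sum_i \int u^2|\nabla\psi_i|^2\,dy = n\int u^2 w^{\frac{4}{n-2}}\,dy \le n\Big(\int u^{\frac{2n}{n-2}}\Big)^{\frac{n-2}{n}}\Big(\int w^{\frac{2n}{n-2}}\Big)^{\frac{2}{n}} = n\,\sigma_n,
\]
by Hölder's inequality with exponents $\frac{n}{n-2}$ and $\frac{n}{2}$. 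On the other hand,
\[
\sum_i\int u^{\frac{2n}{n-2}}\psi_i^2\,dy = \sigma_n.
\]
So if every $\psi_i$ had Rayleigh quotient larger than $\lambda:=\lambda_1(\mathbb{R}^n,g_u)$ would allow, summing over $i$ would give a contradiction. More precisely, if each $\psi_i$ were an admissible test function, then $\lambda\int u^{\frac{2n}{n-2}}\psi_i^2 \le \int u^2|\nabla\psi_i|^2$ for every $i$, and summing yields $\lambda\,\sigma_n\le n\,\sigma_n$, that is, $\lambda\le n$.

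\textbf{Main obstacle: admissibility.} The $\psi_i$ are not compactly supported, so they are not in the admissible class of Definition~\ref{def}. I would fix this by cutting off: take $\eta_R(y)=\eta(y/R)$ and consider $\psi_i\eta_R - c_{i,R}\,\chi$, where $\chi\in C_c^\infty$ is fixed with $\int\chi\,u^{\frac{2n}{n-2}}\ne0$ (possible since $u$ is nontrivial) and $c_{i,R}$ is chosen to restore the zero-mean condition. Since $\psi_i$ is bounded and $\int\psi_i u^{\frac{2n}{n-2}}=0$, dominated convergence gives $c_{i,R}\to0$. The denominators converge to $\int u^{\frac{2n}{n-2}}\psi_i^2$. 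For the gradient term, the cutoff error is bounded by
\[
\int_{R<|y|<2R} u^2|\nabla\eta_R|^2 \le \Big(\int_{|y|>R} u^{\frac{2n}{n-2}}\Big)^{\frac{n-2}{n}}\Big(\int |\nabla\eta_R|^{n}\Big)^{\frac{2}{n}},
\]
and the second factor is scale invariant, hence bounded, while the first tends to $0$. The term $u^2|\nabla\psi_i|^2$ is integrable by the Hölder bound above. Letting $R\to\infty$ in the inequality $\lambda\cdot(\text{denominator})\le\text{numerator}$ for each $i$, and then summing, gives the claim.

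The only delicate point is the center-of-mass step for a general finite measure $\mu$. Here I would invoke the standard Hersch lemma exactly as in Lemma~2.5 of \cite{zbMATH06774840}, which uses only that $\mu$ has no atoms.
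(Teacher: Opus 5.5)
Your proposal is correct and follows essentially the same route as the paper: Hersch balancing of the measure $u^{\frac{2n}{n-2}}\,dy$, the coordinate functions of the balanced conformal map as test functions, and H\"older's inequality combined with $\sum_i |\nabla \psi_i|^2 = n\,w^{\frac{4}{n-2}}$ to bound the total weighted Dirichlet energy by $n\sigma_n$. The differences are only technical. You control the cutoff error with a scaled smooth cutoff, using that $\|\nabla\eta_R\|_{L^n}$ is scale invariant and that the tail $\int_{|y|>R}u^{\frac{2n}{n-2}}$ vanishes, and you restore the zero mean with a fixed bump $\chi$. The paper instead uses a logarithmic cutoff with $\|\nabla\eta_R\|_{L^n}\to 0$ and subtracts the weighted mean inside the cutoff; your test functions are therefore already in $C_c^\infty$ and need no regularization.
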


\proof

Since $u \ge 0$ and $u^{\frac{2n}{n-2}} \in L^1(\mathbb{R}^n)$, the measure $u^{\frac{2n}{n-2}}\,dy$ defines a finite measure on $\mathbb{R}^n$ satisfying
\[
0 < \int_{\mathbb{R}^n} u^{\frac{2n}{n-2}} \, dy < \infty.
\]

By an argument analogous to the conformal uniformization of Hersch type 
(see Lemma 2.5 in \cite{zbMATH06774840} or property (2.4) in \cite{zbMATH00027724}), 
there exists a smooth conformal map
\[
\psi \colon (\mathbb{R}^n, u^{\frac{2n}{n-2}} \, dy) \longrightarrow (\mathbb{S}^n, dV_{\bar g})
\]
such that
\begin{equation}\label{eq:center}
\int_{\mathbb{R}^n} (x^i \circ \psi) \, u^{\frac{2n}{n-2}} \, dy = 0, 
\qquad 1 \le i \le n+1.
\end{equation}

\medskip

Let \(0 < r < R\), and define a radial cut-off function \(\eta_R \in W^{1,n}(\mathbb{R}^n)\) by
\[
\eta_R(x) :=
\begin{cases}
1, & |x| < r,\\[2mm]
\displaystyle \frac{\log(|x|/R)}{\log(r/R)}, & r \le |x| \le R,\\[1mm]
0, & |x| > R.
\end{cases}
\]
Then \(\eta_R \to 1\) uniformly on compact subsets of \(\mathbb{R}^n\) as \(R \to \infty\), and for any \(f \in L^1(\mathbb{R}^n)\), the dominated convergence theorem implies
\begin{equation}\label{etar}
\int_{\mathbb{R}^n} f \, \eta_R \, dy \longrightarrow \int_{\mathbb{R}^n} f \, dy \quad \text{as } R \to \infty.
\end{equation}
A direct computation also gives
\begin{equation}\label{eq:cutoff-gradient}
\int_{\mathbb{R}^n} |\nabla \eta_R|^n \, dy
= \sigma_{n-1} \left( \log \left(\frac{R}{r} \right)\right)^{1-n}.
\end{equation}

\medskip
For each \( i\in \{1,...,n+1\}\)
we define 
\[
f_{i,R} := \eta_R \bigl(x^i \circ \psi - c_{i,R}\bigr), 
\qquad
c_{i,R} := \frac{\displaystyle \int_{\mathbb{R}^n} \eta_R (x^i \circ \psi) \, u^{\frac{2n}{n-2}} \, dy}{\displaystyle \int_{\mathbb{R}^n} \eta_R \, u^{\frac{2n}{n-2}} \, dy}.
\]
The function \(f_{i,R}\) lies in \(W^{1,n}(\mathbb{R}^n)\).
After regularizing  $f_{i,R}$ and applying the definition of \( \lambda_1(\mathbb{R}^n,g_u)\) we obtain 
\[
\lambda_1(\mathbb{R}^n, g_u) 
\int_{\mathbb{R}^n} u^{\frac{2n}{n-2}} f_{i,R}^2 \, dy
\le 
\int_{\mathbb{R}^n} u^2 |\nabla f_{i,R}|^2 \, dy.
\]

Summing over \(i = 1, \dots, n+1\) gives
\begin{equation}\label{eq:main-ineq}
\lambda_1(\mathbb{R}^n,g_u) \int_{\mathbb{R}^n} u^{\frac{2n}{n-2}} \sum_{i=1}^{n+1} f_{i,R}^2 \, dy
\le \int_{\mathbb{R}^n} u^2 \sum_{i=1}^{n+1} |\nabla f_{i,R}|^2 \, dy.
\end{equation}

We analyze the behavior of both sides of \eqref{eq:main-ineq} as \(R \to \infty\).

We first consider the left-hand side. By definition,
\[
\sum_{i=1}^{n+1} f_{i,R}^2
=
\eta_R^2
\left(
\sum_{i=1}^{n+1} (x^i \circ \psi)^2
- 2 \sum_{i=1}^{n+1} c_{i,R} (x^i \circ \psi)
+ \sum_{i=1}^{n+1} c_{i,R}^2
\right).
\]

From \eqref{etar} together with \eqref{eq:center}, we obtain
\[
\lim_{R \to \infty} c_{i,R}
=
\frac{\displaystyle \int_{\mathbb{R}^n} (x^i \circ \psi)\, u^{\frac{2n}{n-2}} \, dy}
{\displaystyle \int_{\mathbb{R}^n} u^{\frac{2n}{n-2}} \, dy}
= 0 .
\]

Since \(\eta_R \to 1\) pointwise, \(c_{i,R} \to 0\) as \(R \to \infty\), and \(x^i \circ \psi\) is bounded, it follows that
\[
\left(\int_{\mathbb{R}^n} u^{\frac{2n}{n-2}}
\sum_{i=1}^{n+1} f_{i,R}^2 \, dy\right)
\longrightarrow
\sigma_n
\qquad \text{as } R \to \infty .
\]

We now turn to the right-hand side of \eqref{eq:main-ineq}. Expanding the gradient term gives
\begin{align*}
|\nabla f_{i,R}|^2
&= |\nabla \eta_R|^2 (x^i \circ \psi - c_{i,R})^2 \\
&\quad + \eta_R^2 |\nabla(x^i \circ \psi)|^2 \\
&\quad + 2 \eta_R (x^i \circ \psi - c_{i,R})
\langle \nabla \eta_R, \nabla(x^i \circ \psi) \rangle .
\end{align*}

Summing over \(i\) and integrating, we obtain
\[
\int_{\mathbb{R}^n} u^2
\sum_{i=1}^{n+1} |\nabla f_{i,R}|^2 \, dy
= I_1 + I_2 + I_3 .
\]
with:
\begin{align*}
I_1 &= \int_{\mathbb{R}^n} u^2 |\nabla \eta_R|^2 \sum_{i=1}^{n+1} (x^i \circ \psi - c_{i,R})^2 \, dy, \\
I_2 &= \sum_{i=1}^{n+1} \int_{\mathbb{R}^n} u^2 \eta_R^2 |\nabla(x^i \circ \psi)|^2 \, dy, \\
I_3 &= 2 \sum_{i=1}^{n+1} \int_{\mathbb{R}^n} u^2 \eta_R (x^i \circ \psi - c_{i,R}) \langle \nabla \eta_R, \nabla(x^i \circ \psi) \rangle \, dy.
\end{align*}

Now we study the limit of  \( I_1 \), \( I_2 \), and \( I_3 \) as \( R \to \infty \).

\medskip

\noindent
\textbf{Estimate of \( I_1 \).}  
Since the functions \( x^i \circ \psi \) and the coefficients \( c_{i,R} \) are uniformly bounded, there exists a positive constant \( C \),  independent of $R$, such that
\[
I_1 \leq C \int_{\mathbb{R}^n} u^2 |\nabla \eta_R|^2 \, dy.
\]
 Applying Hölder's inequality with exponents \( \frac{n}{2} \) and \( \frac{n}{n-2} \), and using  \eqref{1}, we obtain
\begin{align*}
I_1 &\leq C \left( \int_{\mathbb{R}^n} u^{\frac{2n}{n-2}} \, dy \right)^{\frac{n-2}{n}} \left( \int_{\mathbb{R}^n} |\nabla \eta_R|^n \, dy \right)^{\frac{2}{n}} \\
&= C \sigma_n^{\frac{n-2}{n}} \left( \sigma_{n-1} \left( \log\left( \frac{R}{r} \right) \right)^{1-n} \right)^{\frac{2}{n}} \\
&= C \sigma_n^{\frac{n-2}{n}} \sigma_{n-1}^{\frac{2}{n}} \left( \log\left( \frac{R}{r} \right) \right)^{\frac{2(1-n)}{n}} \to 0 \quad \text{as } R \to \infty.
\end{align*}
Therefore, \[\lim_{R\to \infty} I_{1}=0.\]
\medskip

\noindent
\textbf{Estimate of \(I_2\).}  
By the dominated convergence theorem and the pointwise convergence \(\eta_R \to 1\) as \(R \to \infty\), we have
\[
\lim_{R \to \infty} I_2
=
\sum_{i=1}^{n+1} \int_{\mathbb{R}^n} u^2 \, |\nabla(x^i \circ \psi)|^2 \, dy.
\]

Let 
\[
\Omega := \{ x \in \mathbb{R}^n : u(x) > 0 \}.
\] 
Since \(u = 0\) on \(\mathbb{R}^n \setminus \Omega\), the above integral can be restricted to \(\Omega\), where \(u\) is strictly positive. Therefore,  under the conformal metric change on $\Omega$ and for \(g_u = u^{\frac{4}{n-2}} \delta\), we obtain
\[
\sum_{i=1}^{n+1} \int_{\mathbb{R}^n} u^2 \, |\nabla(x^i \circ \psi)|^2 \, dy
=
\sum_{i=1}^{n+1} \int_{\Omega} |\nabla(x^i \circ \psi)|_{g_u}^2 \, dV_{g_u}.
\]

Since \(\psi\) is conformal, there exists a positive function \(f\) on \(\Omega\) such that
\begin{equation}\label{lali ta}
    \psi^* \bar g = f^2 g_u ,
\end{equation}
and hence
\[
\sum_{i=1}^{n+1} |\nabla(x^i \circ \psi)|_{g_u}^2 = n f^2.
\]

It follows that
\begin{align*}
\sum_{i=1}^{n+1} \int_{\mathbb{R}^n} u^2 \, |\nabla(x^i \circ \psi)|^2 \, dy
&= \sum_{i=1}^{n+1} \int_{\Omega} |\nabla(x^i \circ \psi)|_{g_u}^2 \, dV_{g_u} \\
&= n \int_{\Omega} f^2 \, dV_{g_u} \\
&\le n \left( \int_{\Omega} f^n \, dV_{g_u} \right)^{\frac{2}{n}}
      \left( \int_{\Omega} dV_{g_u} \right)^{\frac{n-2}{n}},
\end{align*}
where the last inequality follows from Hölder's inequality. Next, using  \eqref{lali ta} and the fact that \(\Omega \subset \mathbb{S}^n\), we obtain
\begin{align*}
\int_{\Omega} f^n \, dV_{g_u}
&= \int_{\Omega} dV_{\psi^*(\bar g)} \\
&= \int_{\psi(\Omega)} dV_{\bar g} \\
&\le \int_{\mathbb{S}^n} dV_{\bar g} \\
&= \sigma_n .
\end{align*}

Combining this estimate with the following volume normalization
\[
\int_{\Omega} dV_{g_u} = \sigma_n,
\]
we conclude that
\[
I_2 \le n \, \sigma_n .
\]
\noindent

\textbf{Estimate of \(I_3\).} 
 By the Cauchy-Schwarz inequality, there exists a positive constant \(C\), independent of $R$, such that
\[
I_3 \le C \sqrt{I_1 I_2}.
\]
Since \(I_2\) is bounded and \(I_1 \to 0\) as \(R \to \infty\), we deduce that
\[
\lim_{R \to \infty} I_3 = 0.
\]
Passing to the limit \( R \to \infty \) in the inequality \eqref{eq:main-ineq} and combining the above estimates, we deduce
\[
\sigma_n \, \lambda_1(\mathbb{R}^n,g_u) \leq n \, \sigma_n.
\]
$\hfill$ $\blacksquare$

\subsection{Volume control}\label{vl}
In this section, we derive estimates for the volumes of small and large geodesic balls under a conformal metric. The following estimate is inspired by the work of Matthiesen (see Proposition 2.4 in~\cite{zbMATH06774840}).

\begin{proposition}\label{prop:volume-inequality}
Let $(M^n, g)$ be a closed Riemannian manifold of dimension $n \geq 3$, and let $x \in M$. 
Let $u \in C^0(M)$ be a positive function satisfying the normalization
\[
\int_M u^{\frac{2n}{n-2}} \, dV_g = \sigma_n.
\]

Assume further that there exists a positive constant $\Lambda$ such that the first positive eigenvalue of the Laplacian of the conformal metric $g_u = u^{\frac{4}{n-2}} g$ satisfies
\[
\lambda_1(M, g_u) \geq n + \frac{1}{\Lambda}.
\]

Then, there exists a constant $C = C(n, \Lambda, M) > 0$ such that for all $0 < r < R < \operatorname{inj}(M)$, the following volume inequality holds:
\begin{equation}\label{eq:vol-inequality}
\bigl(V_{g_u}(B(x,r))\bigr)^2 \,\bigl(\sigma_n - V_{g_u}(B(x,R))\bigr)
\leq C \, \left( \log \left(\frac{R}{r}\right) \right)^{\frac{2(1-n)}{n}}.
\end{equation}
\end{proposition}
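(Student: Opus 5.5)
The plan is to use a logarithmic cut-off centred at $x$ as a test function in the variational characterization \eqref{rlq} of $\lambda_1(M,g_u)$, exactly as in the estimate of $I_1$ in Proposition~\ref{proposition 1}. Write $d(y)=d_g(x,y)$ and define $\eta$ by $\eta=1$ on $B(x,r)$, $\eta=\log(d/R)/\log(r/R)$ on $B(x,R)\setminus B(x,r)$, and $\eta=0$ outside $B(x,R)$. This is Lipschitz, so it lies in $H^1(M)$. Since $R<\operatorname{inj}(M)$, the volume form of $g$ in normal coordinates is comparable to the Euclidean one with constants depending only on $(M,g)$. Hence
\[
\int_M |\nabla \eta|_g^n\, dV_g \le C \Bigl(\log \frac{R}{r}\Bigr)^{1-n},
\]
as in \eqref{eq:cutoff-gradient}.

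The gradient term is handled by conformal invariance of the Dirichlet energy weighted by $u^2$. With $g_u=u^{\frac{4}{n-2}}g$ one has $|\nabla\phi|^2_{g_u}\,dV_{g_u}=u^2|\nabla\phi|_g^2\,dV_g$. Applying Hölder with exponents $\frac n2,\frac n{n-2}$ and the normalization gives
\[
\int_M |\nabla\eta|^2_{g_u} dV_{g_u}\le \sigma_n^{\frac{n-2}{n}}\Bigl(\int_M|\nabla\eta|^n_g dV_g\Bigr)^{2/n}\le C\Bigl(\log\frac Rr\Bigr)^{\frac{2(1-n)}{n}} .
\]
The same bound holds for the mean-zero function $\phi=\eta-\bar\eta$, where $\bar\eta=\sigma_n^{-1}\int_M\eta\,dV_{g_u}$, since subtracting a constant does not change the gradient.

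It remains to bound $\int_M\phi^2\,dV_{g_u}$ from below by a multiple of $V_{g_u}(B(x,r))^2(\sigma_n-V_{g_u}(B(x,R)))$; this is the step needing care. Set $a=V_{g_u}(B(x,r))$ and $b=\sigma_n-V_{g_u}(B(x,R))$. Since $0\le\eta\le1$, we have $a\le \int\eta\,dV_{g_u}\le \sigma_n-b$, so $\bar\eta\in[a/\sigma_n,\,1-b/\sigma_n]$. Then:
\begin{itemize}
\item On $M\setminus B(x,R)$, where $\eta=0$, we get $\phi^2=\bar\eta^2\ge a^2/\sigma_n^2$. This contributes at least $a^2 b/\sigma_n^2$.
\item On $B(x,r)$, where $\eta=1$, we get $\phi^2=(1-\bar\eta)^2\ge b^2/\sigma_n^2$. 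This contributes at least $a b^2/\sigma_n^2$.
\end{itemize}
Only the first contribution is needed: $\int_M\phi^2\,dV_{g_u}\ge a^2b/\sigma_n^2$.

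Now apply \eqref{rlq} to $\phi$ and use $\lambda_1(M,g_u)\ge n+\frac1\Lambda$, which only requires $\lambda_1\ge c>0$. This gives
\[
c\,\frac{a^2 b}{\sigma_n^2}\le C\Bigl(\log\frac Rr\Bigr)^{\frac{2(1-n)}{n}},
\]
which is \eqref{eq:vol-inequality}.

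The main obstacle is making sure the lower bound on $\int\phi^2$ does not degenerate. The point is that the mean $\bar\eta$ is trapped between $a/\sigma_n$ and $1-b/\sigma_n$, so the product structure $a^2b$ appears naturally. The remaining ingredients are routine: the comparison of $g$ with Euclidean geometry inside the injectivity radius, which fixes the dependence of $C$ on $M$, and the justification that the Lipschitz $\phi$ is admissible in \eqref{rlq}.
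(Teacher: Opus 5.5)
Your proof is correct and follows the paper's argument. Both use the same logarithmic cut-off centred at $x$, apply the Rayleigh quotient \eqref{rlq} to $\eta$ minus its $g_u$-mean, bound $\int_M\phi^2\,dV_{g_u}$ from below by restricting it to $M\setminus B(x,R)$, and control the energy by Hölder with exponents $\frac n2,\frac n{n-2}$. Your normalization $\bar\eta=\sigma_n^{-1}\int_M\eta\,dV_{g_u}$ is in fact the correct mean-zero choice: the paper's $\alpha_{r,R}$ omits the factor $\sigma_n^{-1}$, which only affects the constant.
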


\proof
Let 
\(
g_u = u^{\frac{4}{n-2}} g
\) 
denote the conformal metric to g associated with \(u\).  
We define the cut-off function $\eta_{r,R}$ on \(M\) by
\[
\eta_{r,R}(z) =
\begin{cases}
\displaystyle \frac{\log \big( d_g(x,z) / R \big)}{\log (r / R)}, & \text{if } r \le d_g(x,z) \le R, \\[2mm]
1, & \text{if } d_g(x,z) \le r, \\[1mm]
0, & \text{if } d_g(x,z) \ge R,
\end{cases}
\]
where \(d_g\) denotes the geodesic distance with respect to the metric \(g\) on $M$. We also define the scalar
\[
\alpha_{r,R} := \int_M u^{\frac{2n}{n-2}} \, \eta_{r,R} \, dV_g
.\]

By construction, \(\eta_{r,R} \in C_c^\infty(M)\), with support contained in the geodesic ball \(B(x,R)\).  
There exists a positive constant \(C = C(M,g)\) such that
\[
\int_M |\nabla \eta_{r,R}|_g^n \, dV_g \le C \left(\log\left(R/r\right)\right)^{1-n}.
\]

The function \(\eta_{r,R}\) will serve as a localized test function for the conformal metric \(g_u\),  
while \(\alpha_{r,R}\) measures its weighted volume.  
This setup is crucial for the subsequent estimates.

By the variational characterization of the first positive eigenvalue of the Laplacian on a closed manifold \eqref{rlq}, it holds that
\[
\lambda_1(M, g_u) \int_M (\eta_{r,R} - \alpha_{r,R})^2 \, dV_{g_u} \leq \int_M u^2 |\nabla \eta_{r,R}|_g^2 \, dV_g.
\]

Since \(\eta_{r,R}\) vanishes outside the geodesic ball \(B(x,R)\), we can estimate the left-hand side from below by
\[
\int_{M \setminus B(x,R)} (\eta_{r,R} - \alpha_{r,R})^2 \, dV_{g_u} = \alpha_{r,R}^2 \left( \sigma_n - V_{g_u}(B(x,R)) \right).
\]
This implies
\[
\alpha_{r,R}^2 \left( \sigma_n - V_{g_u}(B(x,R)) \right) \leq \int_M (\eta_{r,R} - \alpha_{r,R})^2 \, dV_{g_u}.
\]

Combining this with the lower bound on \(\lambda_1(M,g_u)\), we obtain
\[
\alpha_{r,R}^2 \left(n + \frac{1}{\Lambda}\right) \left( \sigma_n - V_{g_u}(B(x,R)) \right) \leq \int_M u^2 |\nabla \eta_{r,R}|_g^2 \, dV_g.
\]

Noting that \(V_{g_u}(B(x,r)) \leq \alpha_{r,R}\), we deduce
\[
\left(n + \frac{1}{\Lambda}\right) \left[ V_{g_u}(B(x,r)) \right]^2 \left( \sigma_n - V_{g_u}(B(x,R)) \right) \leq \int_M u^2 |\nabla \eta_{r,R}|_g^2 \, dV_g.
\]

Applying Hölder's inequality and using the normalization condition on \(u\), we further estimate
\[
\int_M u^2 |\nabla \eta_{r,R}|_g^2 \, dV_g \leq \left( \int_M u^{\frac{2n}{n-4}} \, dV_g \right)^{\frac{n-4}{n}} \left( \int_M |\nabla \eta_{r,R}|_g^n \, dV_g \right)^{\frac{2}{n}} \leq C \left( \int_M |\nabla \eta_{r,R}|_g^n \, dV_g \right)^{\frac{2}{n}}.
\]

Finally, the quantity \(\int_M |\nabla \eta_{r,R}|_g^n \, dV_g\) is independent of the choice of the point $x$, and we have the following estimates

\[
\int_M |\nabla \eta_{r,R}|_g^n \, dV_g \leq \frac{C}{\left( \log \left(\frac{R}{r}\right) \right)^{n-1}},
\]
which yields the desired bound
\[
\int_M u^2 |\nabla \eta_{r,R}|_g^2 \, dV_g \leq \frac{C}{\left( \log \left(\frac{R}{r}\right) \right)^{\frac{2(n-1)}{n}}}.
\]

The inequality \eqref{eq:vol-inequality} then follows immediately.$\hfill$ $\blacksquare$
\subsection{Uniform $L^\infty$ bounds via blow-up analysis}

\subsubsection{Blow-up assumption and rescaling}\label{blup}
Let \((u_k)_k \subset \mathcal{E}_{\Lambda,p}\) be a sequence of functions.
Assume, for contradiction, that this sequence is unbounded in \(L^\infty(M)\), i.e.
\[
\|u_k\|_{L^\infty(M)} \longrightarrow +\infty \quad \text{as } k \to \infty.
\]

For each \(k\), let \(x_k \in M\) be a point where \(u_k\) attains its maximum:
\[
u_k(x_k) = \max_{M} u_k = \|u_k\|_{L^\infty(M)}.
\]

We introduce the scaling factor
\[
\mu_k := u_k(x_k)^{-\frac{2}{n-4}} \longrightarrow 0.
\]

Let \(r \) be a positive radius.  Since \(\mu_k \to 0\), we have \(\mu_k r < \operatorname{inj}(M)\) for \(k\) sufficiently large. 
We then use exponential coordinates centered at \(x_k\) and define the rescaled functions \(v_k\) by
\[
v_k(y) := \mu_k^{\frac{n-4}{2}}\, u_k\big( \exp_{x_k}(\mu_k y) \big),
\qquad y \in B\!\left(0,r\right).
\]

By construction,
\(
v_k(0) = 1 = \max_{B(0,r)} v_k.\) To analyze the geometry in these coordinates, we introduce the rescaling map
\[
\bar{\psi}_k(y) := \exp_{x_k}(\mu_k y), \qquad y \in B\!\left(0,r\right),
\]
and define the pullback metric
\[
(\bar{\psi}_k^*g)(y)
= \mu_k^2 (\exp_{x_k}^* g)(\mu_k y).
\]
We then normalize the metric by setting
\[
h_k(y) := \mu_k^{-2}\,(\bar{\psi}_k^*g)(y)
          = (\exp_{x_k}^* g)(\mu_k y),
          \qquad y \in B\left(0,r\right).
\]
Thus \(h_k\) is obtained by dilating the metric \(g\) around \(x_k\) with scale \(\mu_k\).
The rescaled metric \(h_k\) can be interpreted as a geometric \emph{zoom-in} around the blow-up point \(x_k\). The smoothness of the metric g on $M$ implies that
\[
h_k \longrightarrow \delta 
\quad \text{in } C^\infty_{\mathrm{loc}}(\mathbb{R}^n).
\]

\medskip

\noindent
\begin{Remark}[ Rescaled balls]
The map \(\bar{\psi}_k\) sends the Euclidean ball \(B\!\left(0,r\right)\) onto the geodesic ball \(B(x_k, \mu_kr)\) in \(M\):
\[
\bar{\psi}_k\left(B\!\left(0,r\right)\right) = B\left(x_k,\mu_k r\right).
\]
\end{Remark}

\noindent

\begin{Remark}[Relation between volume measures]
Since
\[
\bar{\psi}_k^*\!\big(u_k^{\frac{2n}{n-4}}\,dV_g\big)
   = v_k^{\frac{2n}{n-4}}\,dV_{h_k},
\]
for every integrable function \(f\) on \(M\), we have
\begin{equation}\label{bl trans}
   \int_{B(0, r)} (f \circ \bar{\psi}_k)\, v_k^{\frac{2n}{n-4}}\, dV_{h_k}
 = \!\int_{B(x_k, \mu_kr)}
    f\, u_k^{\frac{2n}{n-4}}\, dV_g. 
\end{equation}
This relation allows us to transfer the analysis of integrals near \(x_k\) to the domain \(B(0,r)\), and it will be crucial for the subsequent blow-up analysis.
\end{Remark}
\subsubsection{ Convergence to a limiting function}\label{hayde hiye}
\begin{proposition}\label{propo3}
For all $ k \in \mathbb{N}$, we denote by $g_{u_{k}}=u_{k}^{\frac{4}{n-4}}g$ the conformal metric to g on M with conformal factor  $ u_{k}^{\frac{4}{n-4}}$. The rescaled function $v_{k}$ in section \ref{blup} is positive and  uniformly bounded by \( v_k(0) = 1 \). It is a solution to the fourth order non-linear PDE:

 \begin{equation}
 P_{h_{k}}(v_{k})=(Q_{g_{u_{k}}}\circ{\bar{\psi_{k}} })v_{k}^{\frac{n+4}{n-4}},
\end{equation}

where $Q_{g_{u_{k}}}$ denotes the Q-curvature related to the metric $g_{u_{k}}$ on M.
\end{proposition}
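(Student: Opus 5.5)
The plan is to derive the equation directly from the conformal covariance of the Paneitz operator, applied twice: once for the conformal change $g\mapsto g_{u_k}$ on $M$, and once for the homothety $\bar\psi_k^*g=\mu_k^2 h_k$. Positivity and the bound $0<v_k\le v_k(0)=1$ are immediate from the definition: $u_k>0$ and $x_k$ is a maximum point of $u_k$, so
\[
v_k(y)=\mu_k^{\frac{n-4}{2}}u_k(\bar\psi_k(y))\le \mu_k^{\frac{n-4}{2}}u_k(x_k)=1 .
\]

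\emph{Step 1 (equation on $M$).} By \eqref{paneitz_eq}, $P_g u_k = \frac{n-4}{2}Q_{g_{u_k}}u_k^{\frac{n+4}{n-4}}$ on $M$. The statement is written without the factor $\frac{n-4}{2}$, which I take to be absorbed into the normalization of $Q$ (equivalently of $P$). I will keep this constant and only check at the end that it is carried consistently. It plays no role in the later $L^p$ and $W^{4,p}$ estimates.

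\emph{Step 2 (pullback by $\bar\psi_k$).} On $B(0,r)$, for $k$ large, $\bar\psi_k$ is a diffeomorphism onto $B(x_k,\mu_k r)$, since $\mu_k r<\operatorname{inj}(M)$. Naturality of $P$ under isometries then gives
\[
P_{\bar\psi_k^*g}(u_k\circ\bar\psi_k)=(P_g u_k)\circ\bar\psi_k .
\]

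\emph{Step 3 (homothety).} Since $\bar\psi_k^*g=\mu_k^2 h_k$, and $P$ is homogeneous of degree $-4$ under constant rescaling of the metric (each term, $\Delta^2$, the Ricci/scalar divergence terms and $Q$, scales as $\mu_k^{-4}$), we get
\[
P_{\bar\psi_k^*g}=\mu_k^{-4}P_{h_k}.
\]
This is also the case $u\equiv\mu_k^{\frac{n-4}{2}}$ of \eqref{pan_pde}. Combining with $u_k\circ\bar\psi_k=\mu_k^{-\frac{n-4}{2}}v_k$ gives
\[
\mu_k^{-4}\mu_k^{-\frac{n-4}{2}}P_{h_k}v_k=\tfrac{n-4}{2}(Q_{g_{u_k}}\circ\bar\psi_k)\,\mu_k^{-\frac{n+4}{2}}v_k^{\frac{n+4}{n-4}} .
\]
The exponents agree, since $4+\frac{n-4}{2}=\frac{n+4}{2}$, so the powers of $\mu_k$ cancel and we obtain the claimed equation. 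A useful consistency check is that $v_k^{\frac{4}{n-4}}h_k=\mu_k^{-2}\bar\psi_k^*g_{u_k}$, i.e.\ the rescaled metric is a homothety of the pulled-back conformal metric, whose $Q$-curvature should match.

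The only delicate step is Step 3: verifying the scaling weight $-4$ of each lower-order term of $P$ and tracking the constant $\frac{n-4}{2}$. The rest is bookkeeping.
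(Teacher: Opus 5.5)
Your proof is correct and is essentially the paper's argument: the paper writes $P_{h_k}(v_k)=\mu_k^{\frac{n+4}{2}}P_{\bar\psi_k^*g}(u_k\circ\bar\psi_k)=\mu_k^{\frac{n+4}{2}}(P_g u_k)\circ\bar\psi_k$ and then inserts the equation for $u_k$. These are exactly your three ingredients: the scaling weight $-4$ under the homothety, naturality under $\bar\psi_k$, and the cancellation $4+\frac{n-4}{2}=\frac{n+4}{2}$.

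Your caution about the constant is justified. With the transformation law \eqref{paneitz_eq}, the identity carries a factor $\frac{n-4}{2}$, which the paper's own computation silently drops by using the normalization $P_g u=Q_{g_u}u^{\frac{n+4}{n-4}}$, as it does again in the Sobolev boundedness proposition. As you say, this constant plays no role in the later estimates.
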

\proof
The proof relies on the conformal invariance property of the Paneitz operator.
 In fact
\begin{align*}
P_{h_k}(v_k) 
&= \mu_k^{\frac{n+4}{2}} \, P_{\bar{\psi}_k^*( g)}(u_k \circ \bar{\psi}_k) \\
&= \mu_k^{\frac{n+4}{2}} \, \left (P_g(u_k) \right) \circ \bar{\psi}_k\\
&= (Q_{g_{u_k}} \circ \bar{\psi}_k) \, v_k^{\frac{n+4}{n-4}}.
\end{align*}
$\hfill$ $\blacksquare$
\noindent

\begin{proposition}\label{prop4}
The sequence $(P_{h_{k}}(v_{k}))_{k}$ is uniformly bounded in $L^{p}_{\mathrm{loc}}(\mathbb{R}^{n},h_{k})$.
\end{proposition}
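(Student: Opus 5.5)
By Proposition~\ref{propo3}, we have $P_{h_k}(v_k)=(Q_{g_{u_k}}\circ\bar\psi_k)\,v_k^{\frac{n+4}{n-4}}$. Since $0<v_k\le v_k(0)=1$, this gives
\[
\int_{B(0,r)} |P_{h_k}(v_k)|^p\,dV_{h_k} \le \int_{B(0,r)} |Q_{g_{u_k}}\circ\bar\psi_k|^p\, v_k^{\frac{n+4}{n-4}p}\,dV_{h_k}.
\]
The plan is to bound this integral by the $L^p(M,g_{u_k})$ norm of $Q_{g_{u_k}}$, which is at most $\Lambda$ because $u_k\in\mathcal{E}_{\Lambda,p}$. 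The natural tool is the change of variables \eqref{bl trans}, applied with $f=|Q_{g_{u_k}}|^p$. It converts the weighted integral $\int_{B(0,r)}(f\circ\bar\psi_k)\,v_k^{\frac{2n}{n-4}}\,dV_{h_k}$ into $\int_{B(x_k,\mu_k r)}|Q_{g_{u_k}}|^p\,dV_{g_{u_k}}\le\Lambda^p$.

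The only issue is matching the power of $v_k$ in our integrand with the weight $v_k^{\frac{2n}{n-4}}$ that appears in \eqref{bl trans}. We split
\[
v_k^{\frac{n+4}{n-4}p}=v_k^{\frac{2n}{n-4}}\cdot v_k^{\frac{(n+4)p-2n}{n-4}}.
\]
Since $p>\frac n4$, we have $(n+4)p-2n>\frac{n(n+4)}{4}-2n=\frac{n(n-4)}{4}>0$. The second factor is therefore a positive power of $v_k$, and so it is at most $1$. Consequently
\[
\|P_{h_k}(v_k)\|^p_{L^p(B(0,r),h_k)}\le \int_{B(x_k,\mu_k r)}|Q_{g_{u_k}}|^p u_k^{\frac{2n}{n-4}}dV_g\le \Lambda^p .
\]
This bound is uniform in $k$ and holds for every fixed $r$, once $k$ is large enough that $\mu_k r<\operatorname{inj}(M)$. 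Since every compact subset of $\mathbb{R}^n$ lies in some ball $B(0,r)$, this gives the uniform $L^p_{\mathrm{loc}}$ bound.

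There is no real obstacle here. The points to check are the exponent sign, which is exactly where $p>n/4$ enters, and the fact that $v_k\le 1$, which holds because $x_k$ is a maximum point. If one wants the bound with respect to Lebesgue measure instead, the convergence $h_k\to\delta$ in $C^\infty_{\mathrm{loc}}$ makes $dV_{h_k}$ and $dy$ uniformly comparable on $B(0,r)$.
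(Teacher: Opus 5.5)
Your proof is correct and is essentially the paper's argument. The paper changes variables to $M$ first and bounds $u_k^{\frac{(n+4)p-2n}{n-4}}\le\|u_k\|_{L^\infty(M)}^{\frac{(n+4)p-2n}{n-4}}$, with the powers of $\mu_k$ then cancelling exactly; this is the same step as your bound $v_k^{\frac{(n+4)p-2n}{n-4}}\le 1$, and your version also correctly records the final constant as $\Lambda^p$ where the paper writes $\Lambda$.
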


\proof
By Proposition~\ref{propo3}, it suffices to show that the sequence
\[
\left(\left(Q_{g_{u_k}} \circ \bar{\psi}_k\right) \, v_k^{\frac{n+4}{n-4}} \right)_k
\]
is uniformly bounded in \(L^p(B(0,R), h_k)\) for every \(R >0\). Using relation \eqref{bl trans}, we obtain
\begin{align*}
\int_{B(0,R)} |Q_{g_{u_k}} \circ \bar{\psi}_k|^p v_k^{p\frac{n+4}{n-4}}\, dV_{h_k} 
&= \mu_k^{-n + \frac{p(n+4)}{2}} 
\int_{B(x_k,\mu_kR)} |Q_{g_{u_k}}|^p \, u_k^{p \frac{n+4}{n-4}} \, dV_g \\
&\le \|u_k\|_\infty^{p \frac{n+4}{n-4} - \frac{2n}{n-4}} 
\, \mu_k^{-n + \frac{p(n+4)}{2}} 
\int_M |Q_{g_{u_k}}|^p \, u_k^{\frac{2n}{n-4}} \, dV_g\\
& \le \Lambda
\end{align*}
where the last inequality follows from the uniform boundedness of $Q_{g_{u_k}}$in $L^p(M,u_k^\frac{4}{n-4}g)$.
$\hfill$ $\blacksquare$
\noindent
\vspace{0.2cm}
\begin{proposition}\label{prop5}
The sequence $(v_k)_k$ is bounded in the Sobolev space $W^{4,p}_{\mathrm{loc}}(\mathbb{R}^n)$.
\end{proposition}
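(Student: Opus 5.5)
The plan is to apply interior $L^p$ elliptic estimates for the fourth-order operator $P_{h_k}$ on nested balls. The needed ingredients are the uniform $L^\infty$ bound $0<v_k\le 1$ and the uniform $L^p_{\mathrm{loc}}$ bound on $P_{h_k}(v_k)$ from Proposition~\ref{prop4}. Fix $R>0$ and work on $B(0,2R)$, which is admissible for $k$ large since $\mu_k\to0$.

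In coordinates, the scaling gives
\[
P_{h_k} = \Delta_{h_k}^2 + \operatorname{div}_{h_k}\bigl(\mu_k^2 (a_n' S_g\, g - b_n'\operatorname{Ric}_g)\circ\bar\psi_k\bigr)(\nabla\cdot,\cdot) + \tfrac{n-4}{2}\,\mu_k^4\, (Q_g\circ\bar\psi_k).
\]
This holds because the curvature of $h_k$ is $\mu_k^2$ times that of $g$ at $\bar\psi_k(y)$, and $Q_{h_k}=\mu_k^4 Q_g\circ\bar\psi_k$. Since $h_k\to\delta$ in $C^\infty_{\mathrm{loc}}$ and the lower-order coefficients tend to $0$ in $C^\infty(B(0,2R))$, the operators $P_{h_k}$ form a family of uniformly elliptic fourth-order operators. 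Their leading part is $\Delta_{h_k}^2$, and all coefficients are bounded in $C^{m}(B(0,2R))$ uniformly in $k$ for every $m$.

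With these coefficient bounds, the Agmon--Douglis--Nirenberg interior $L^p$ estimate (with $1<p<\infty$; we may assume this, since for $p$ larger the hypothesis still gives the bound for a finite exponent) yields a constant $C=C(R,n,p)$ independent of $k$ such that
\[
\|v_k\|_{W^{4,p}(B(0,R))} \le C\bigl(\|P_{h_k} v_k\|_{L^p(B(0,2R))} + \|v_k\|_{L^p(B(0,2R))}\bigr).
\]
Proposition~\ref{prop4} bounds the first term. The bound $0<v_k\le1$ bounds the second by $C R^{n/p}$. Since $h_k$ and $\delta$ are uniformly equivalent on $B(0,2R)$, the $L^p$ norms with respect to $dV_{h_k}$ and $dy$ are comparable. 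This gives the claim for every $R$.

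The main point needing care is the uniformity of the ADN constant in $k$. I would handle it by noting that the constant depends only on the ellipticity constant, the moduli of continuity (indeed $C^{0,1}$ bounds) of the leading coefficients, and $L^\infty$ bounds of the lower-order coefficients. All of these are controlled because $h_k\to\delta$ smoothly. Alternatively, one can view $P_{h_k}=\Delta^2 + (\text{perturbation})$ with the perturbation of small $C^k$ size for $k$ large, and absorb it into the left side via the Euclidean Calder\'on--Zygmund estimate for $\Delta^2$ applied to $\eta v_k$ with a cutoff $\eta$. The commutator terms, which involve $W^{3,p}$ norms on slightly larger balls, are handled by interpolation $\|v\|_{W^{3,p}}\le\varepsilon\|v\|_{W^{4,p}}+C_\varepsilon\|v\|_{L^p}$ and a standard covering or iteration argument over radii between $R$ and $2R$.
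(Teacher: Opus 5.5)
Your proposal is correct and follows the same route as the paper: an interior $L^p$ elliptic estimate for $P_{h_k}$ on nested balls with a $k$-independent constant, combined with the $L^p_{\mathrm{loc}}$ bound on $P_{h_k}(v_k)$ from Proposition~\ref{prop4}, the bound $0<v_k\le 1$, and the uniform equivalence of $h_k$ and $\delta$ on compact sets. Your additional justification of why the elliptic constant is uniform in $k$ (the lower-order coefficients of $P_{h_k}$ carry factors $\mu_k^2,\mu_k^4$, and $h_k\to\delta$ smoothly) fills in a step that the paper simply calls standard.
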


\proof
Let $R>0$ be fixed. First, we observe that on the Euclidean ball $B(0,R)$, the Sobolev norms associated with the metrics $h_k$ and the Euclidean metric $\delta$ are uniformly equivalent. Indeed, since the metrics $h_k$ converge smoothly to $\delta$ on compact sets, there exists a positive  constant $C=C(R, m, p)$, independent of $k$, such that for any smooth function $u$ supported in $B(0,R)$, for any $m \in \mathbb{N}$ and $p \in [1, \infty[ $,
\[
\frac{1}{C(R)} \|u\|_{W^{m,p}(B(0,R))} \le \|u\|_{W^{m,p}(B(0,R),h_k)} \le C(R) \|u\|_{W^{m,p}(B(0,R))}.
\]
This allows us to work interchangeably with either metric without affecting uniform bounds.

Next, we apply standard elliptic regularity for the Paneitz operator $P_{h_k}$ on $B(0,R)$. More precisely, there exists a constant $C = C(n,R)>0$ such that
\[
\|v_k\|_{W^{4,p}(B(0,\frac{R}{2}),h_k)} \le C \Big( \|P_{h_k}(v_k)\|_{L^{p}(B(0,R ),h_k)} + \|v_k\|_{L^{p}(B(0,R),h_k)} \Big).
\]

By Proposition~\ref{prop4}, the sequence $(P_{h_k}(v_k))_k$ is uniformly bounded in $L^p(B(0,R))$, and since $||v_k||_\infty \le 1$, the $L^p$ norm of $v_k$ is also uniformly bounded. Combining these estimates, we deduce the existence of a constant $C = C(n,R) > 0$, independent of $k$, such that
\[
\|v_k\|_{W^{4,p}(B(0,\frac{R}{2}),h_k)} \le C.
\]

Finally, the uniform equivalence of the Sobolev norms with respect to $h_k$ and $\delta$ implies that $(v_k)_k$ is bounded in $W^{4,p}_{loc}(\mathbb{R}^n)$ with respect to the Euclidean metric.
$\hfill$ $\blacksquare$

\vspace{0.15cm}

\begin{proposition}\label{existence}
There exists a subsequence of \((v_k)_k\), still denoted \((v_k)_k\), and a continuous, non-negative function \(v_\infty : \mathbb{R}^n \to \mathbb{R}\) such that
\[
v_k \longrightarrow v_\infty \quad \text{in } C^{\alpha}_{\mathrm{loc}}(\mathbb{R}^n),
\]
for all $\alpha$ verifying
\[
\alpha \in \left(0,\, \min\Big\{1,\, 4 - \frac{n}{p}\Big\}\right).
\]
\end{proposition}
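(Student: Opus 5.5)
The plan is to combine Proposition~\ref{prop5} with the compact Sobolev--Morrey embedding on balls and a diagonal extraction over an exhausting sequence of radii.

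\textbf{Step 1 (compactness on a fixed ball).} Fix $R>0$. By Proposition~\ref{prop5}, $(v_k)_k$ is bounded in $W^{4,p}(B(0,R))$, with Euclidean norms, for $k$ large. Since $p>\frac n4$, we have $4-\frac np>0$. Two cases arise:
\begin{itemize}
\item If $4-\frac np\notin\mathbb N$, Morrey's inequality gives $W^{4,p}(B(0,R))\hookrightarrow C^{m,\beta}(\overline{B(0,R)})$ with $m=\lfloor 4-\frac np\rfloor$ and $\beta=4-\frac np-m$.
\item In the borderline integer case, we use $W^{4,p}\hookrightarrow W^{4,q}$ for slightly smaller $q>\frac n4$, and then apply Morrey's inequality again.
\end{itemize}
In either case the Sobolev embedding $W^{4,p}(B(0,R))\hookrightarrow C^{0,\gamma}(\overline{B(0,R)})$ holds for every $\gamma<\min\{1,4-\frac np\}$ (with $\gamma=1$ allowed when $4-\frac np>1$). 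By the Rellich--Kondrachov theorem, or equivalently Arzelà--Ascoli together with the compactness of $C^{0,\gamma}\hookrightarrow C^{0,\alpha}$ for $\alpha<\gamma$, the embedding $W^{4,p}(B(0,R))\hookrightarrow C^{\alpha}(\overline{B(0,R)})$ is compact for every $\alpha<\min\{1,4-\frac np\}$.

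\textbf{Step 2 (diagonal extraction).} Take radii $R_j=j$ and a sequence $\alpha_j\uparrow \min\{1,4-\frac np\}$. Extract nested subsequences converging in $C^{\alpha_j}(\overline{B(0,j)})$, and take the diagonal subsequence. The limits are compatible on overlapping balls, so they define a single function $v_\infty$ on $\mathbb R^n$. For any admissible $\alpha$ and any compact set $K$, choose $j$ with $\alpha\le\alpha_j$ and $K\subset B(0,j)$. Convergence in $C^{\alpha_j}$ implies convergence in $C^{\alpha}$ on $K$, so $v_k\to v_\infty$ in $C^\alpha_{\mathrm{loc}}(\mathbb R^n)$.

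\textbf{Step 3 (properties of the limit).} Uniform convergence on compact sets gives that $v_\infty$ is continuous. It is nonnegative because each $v_k>0$. Moreover $0\le v_\infty\le 1$ and $v_\infty(0)=\lim v_k(0)=1$, so $v_\infty$ is nontrivial; this is not needed here, but will be used later. As a by-product, weak compactness in $W^{4,p}_{\mathrm{loc}}$ also gives $v_\infty\in W^{4,p}_{\mathrm{loc}}$.

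\textbf{Main obstacle.} The only delicate point is the precise Hölder range. Compactness requires strictly subcritical exponents, which explains the open interval in $\alpha$. The integer case of $4-\frac np$ is handled by lowering $p$ slightly as in Step 1. One must also note that the domains $B(0,r)$ of $v_k$ exhaust $\mathbb R^n$, since $r$ can be taken as large as $\operatorname{inj}(M)/\mu_k\to\infty$; this ensures $v_k$ is defined on $B(0,R)$ for $k$ large.
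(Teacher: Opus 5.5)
Your proposal is correct and follows the paper's argument: uniform $W^{4,p}_{\mathrm{loc}}$ bounds from Proposition~\ref{prop5}, the compact embedding $W^{4,p}(B(0,R))\hookrightarrow C^{\alpha}$ on each ball (the paper cites Gilbarg--Trudinger, Theorem 7.26), a diagonal extraction over increasing balls, and passing $0\le v_k\le 1$, $v_k(0)=1$ to the limit. You also handle two points the paper leaves implicit: the integer case $4-\frac np\in\mathbb{N}$, and a further diagonalization over $\alpha_j\uparrow\min\{1,4-\frac np\}$, which gives a single subsequence valid for all admissible $\alpha$ at once.
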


\proof
By Proposition~\ref{prop5}, the sequence \((v_k)_k\) is bounded in the Sobolev space \(W^{4,p}_{\mathrm{loc}}(\mathbb{R}^n)\). 
For any fixed \(R>0\), the compact Sobolev embedding
\[
W^{4,p}(B(0,R)) \hookrightarrow C^{\alpha}(B(0,R))
\]
holds for all \(\alpha\) in the open interval \(\left(0,\, \min\{1,\, 4 - \frac{n}{p}\}\right)\) (see Theorem 7.26 in \cite{zbMATH01554166}). 
Consequently, we can extract a subsequence, still denoted by \((v_k)_k\), such that
\[
v_k \rightharpoonup v_\infty \quad \text{in } W^{4,p}(B(0,R)) \quad \text{and} \quad v_k \to v_\infty \quad \text{in } C^{\alpha}(B(0,R)).
\]

Applying a diagonal extraction procedure over an increasing sequence of balls \(B(0,R)\), we obtain a function \(v_\infty \in C^{\alpha}_{\mathrm{loc}}(\mathbb{R}^n)\) such that the sequence \((v_k)_k\) converges strongly to \(v_\infty\) in \(C^{\alpha}_{\mathrm{loc}}(\mathbb{R}^n)\).

Finally, since each \(v_k\) is non-negative and uniformly bounded in \(L^\infty(\mathbb{R}^n)\), the limit \(v_\infty\) is also non-negative and satisfies
\[
0 \le v_\infty \le 1, \quad \text{with} \quad v_\infty(0) = 1.
\]
$\hfill$ $\blacksquare$

\subsubsection{Limiting conformal metric }

In this section, we show that the limiting profile \(v_\infty\), obtained through a blow-up analysis, leads to a contradiction with the a priori estimate stated in Proposition~\ref{proposition 1}. This contradiction arises from the preservation of the volume and the uniform lower bound on the first positive eigenvalue of the Laplacian along the sequence.

\begin{proposition}\label{contr1}
The limit function \(v_\infty\) satisfies the identity
\[
\int_{\mathbb{R}^{n}} v_{\infty}^{\frac{2n}{n-4}} \, dx = \sigma_{n}.
\]
\end{proposition}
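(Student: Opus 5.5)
We prove the two inequalities separately. The upper bound comes from Fatou's lemma, and the lower bound from the volume inequality of Proposition~\ref{prop:volume-inequality} applied to the conformal metrics $g_{u_k}=u_k^{\frac{4}{n-4}}g$.

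\textbf{Upper bound.} Fix $r>0$. By Proposition~\ref{existence}, $v_k\to v_\infty$ uniformly on $B(0,r)$, and $h_k\to\delta$ smoothly, so $dV_{h_k}\to dy$ uniformly there. Taking $f\equiv1$ in \eqref{bl trans} gives
\[
\int_{B(0,r)} v_\infty^{\frac{2n}{n-4}}\,dy=\lim_{k\to\infty}\int_{B(0,r)} v_k^{\frac{2n}{n-4}}\,dV_{h_k}=\lim_{k\to\infty}V_{g_{u_k}}\big(B(x_k,\mu_k r)\big)\le\sigma_n .
\]
Letting $r\to\infty$ yields $\int_{\mathbb{R}^n}v_\infty^{\frac{2n}{n-4}}\,dy\le\sigma_n$. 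Since $v_\infty(0)=1$ and $v_\infty$ is continuous, we also have $\int_{B(0,r_0)}v_\infty^{\frac{2n}{n-4}}\,dy=:2\beta>0$ for some small $r_0>0$.

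\textbf{Lower bound.} Here we use Proposition~\ref{prop:volume-inequality}. That result is stated for $u^{\frac{4}{n-2}}g$, but its proof uses only the following ingredients:
\begin{itemize}
\item the Rayleigh quotient \eqref{rlq} for the metric $g_{u_k}$;
\item the weight $u_k^{\frac{2(n-2)}{n-4}}$ in the Dirichlet energy $\int|\nabla\eta|^2_{g_{u_k}}dV_{g_{u_k}}$;
\item Hölder's inequality with exponents $\frac n{n-2}$ and $\frac n2$, together with $\int_M u_k^{\frac{2n}{n-4}}\,dV_g=\sigma_n$.
\end{itemize}
It therefore applies verbatim in the present setting. (One must check the exponents here: the Dirichlet weight is $u_k^{\frac{2(n-2)}{n-4}}$, whose $\frac{n}{n-2}$-th power is exactly $u_k^{\frac{2n}{n-4}}$.)

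Fix $\rho\in(0,\operatorname{inj}(M))$. Apply the inequality at $x_k$ with radii $\mu_k r_0<\rho$. For $k$ large, $V_{g_{u_k}}(B(x_k,\mu_k r_0))\ge\beta$, so
\[
\sigma_n-V_{g_{u_k}}\big(B(x_k,\rho)\big)\le C\beta^{-2}\Big(\log\frac{\rho}{\mu_k r_0}\Big)^{\frac{2(1-n)}{n}}\longrightarrow0 .
\]
Thus $V_{g_{u_k}}(B(x_k,\rho))\to\sigma_n$ for every fixed $\rho$. This step alone does not exclude mass lost at intermediate scales between $\mu_k r_0$ and $\rho$, so we need a second application with radii chosen according to the scaling.

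\textbf{Main obstacle: no loss of mass at intermediate scales.} We want to show that for every $\varepsilon>0$ there is $R$ such that $V_{g_{u_k}}(B(x_k,\mu_kR))\ge\sigma_n-\varepsilon$ for $k$ large. Apply Proposition~\ref{prop:volume-inequality} with inner radius $\mu_k r_0$ and outer radius $\mu_k R$, which gives
\[
\beta^2\big(\sigma_n-V_{g_{u_k}}(B(x_k,\mu_kR))\big)\le C\Big(\log\frac{R}{r_0}\Big)^{\frac{2(1-n)}{n}} .
\]
The right-hand side does not depend on $k$, and it is smaller than $\beta^2\varepsilon$ once $R$ is large. Passing to the limit $k\to\infty$ with the upper-bound computation at radius $R$, we obtain
\[
\int_{B(0,R)}v_\infty^{\frac{2n}{n-4}}\,dy\ \ge\ \sigma_n-\varepsilon .
\]
Letting $\varepsilon\to0$ gives the reverse inequality. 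Combined with the upper bound, this proves the identity.

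The delicate points are the following:
\begin{itemize}
\item the lower bound $\beta$ on the inner ball must be uniform in $k$, which follows from the uniform $C^\alpha$ convergence;
\item the constant $C$ in Proposition~\ref{prop:volume-inequality} must be independent of the center $x_k$, which holds by compactness of $M$;
\item the hypothesis $\lambda_1\ge n+\frac1\Lambda$ must be used in the $\frac{4}{n-4}$ normalization.
\end{itemize}
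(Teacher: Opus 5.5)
Your argument is correct and is essentially the paper's proof: apply Proposition~\ref{prop:volume-inequality} with inner radius $\mu_k r_0$ and outer radius $\mu_k R$, transfer to $v_k$ via \eqref{bl trans}, let $k\to\infty$ and then $R\to\infty$, with the upper bound $\int_{B(0,R)} v_\infty^{\frac{2n}{n-4}}\,dx\le\sigma_n$ (which the paper leaves implicit) supplying the other inequality. Your fixed-$\rho$ step is unnecessary, and the exponent check is simplest done by applying the proposition directly to $w_k=u_k^{\frac{n-2}{n-4}}$, since $w_k^{\frac{4}{n-2}}g=g_{u_k}$ and $\int_M w_k^{\frac{2n}{n-2}}\,dV_g=\sigma_n$.
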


\proof
Recall that the sequence \((u_k)_k\) of smooth positive functions in the set $\mathcal{E}_{\Lambda,p}$ satisfies
\[
\int_M u_k^{\frac{2n}{n-4}} \, dV_g = \sigma_n, 
\quad  
\lambda_1\big(M,u_{k}^{\frac{4}{n-4}}g\big) \geq n + \frac{1}{\Lambda}.
\]

Then, by Proposition~\ref{prop:volume-inequality}, there exists a constant 
\(C = C(M, g, n) > 0\) such that for all \(0 < r < R\), and for \(k\) sufficiently large so that 
\(\mu_k R < \operatorname{inj}(M)\), we have
\[
\left( \int_{B(x_k, \mu_k r)} u_k^{\frac{2n}{n-4}} \, dV_g \right)^2 
\left( \int_{M \setminus B(x_k, \mu_k R)} u_k^{\frac{2n}{n-4}} \, dV_g \right)
\leq C \left( \log\left( \frac{R}{r} \right) \right)^{\frac{2(1 - n)}{n}}.
\]

Using relation  \eqref{bl trans}, we obtain the corresponding estimate for the function \(v_k\):
\begin{equation}\label{eq:rescaled-volume}
\left( \int_{B(0, \beta_0)} v_k^{\frac{2n}{n-4}} \, dV_{h_k} \right)^2
\left( \sigma_n - \int_{B(0, \beta)} v_k^{\frac{2n}{n-4}} \, dV_{h_k} \right)
\leq C \left( \log\left( \frac{\beta}{\beta_0} \right) \right)^{\frac{2(1 - n)}{n}},
\end{equation}
for every \(0 < \beta_0 < \beta\), and for all \(k\) sufficiently large. Passing to the limit as \(k\) tends to infinity, and using the facts that the sequence of metrics \((h_k)_k\) converges locally to the Euclidean metric \(\delta\) and that \(v_k \to v_\infty\) in \(C^\alpha_{\mathrm{loc}}(\mathbb{R}^n)\), we can apply the dominated convergence theorem to deduce that

\[
\left( \int_{B(0, \beta_0)} v_\infty^{\frac{2n}{n-4}} \, dx \right)^2
\left( \sigma_n - \int_{B(0, \beta)} v_\infty^{\frac{2n}{n-4}} \, dx \right)
\leq C \left( \log\left( \frac{\beta}{\beta_0} \right) \right)^{\frac{2(1 - n)}{n}}.
\]
Since the first factor on the left-hand side is strictly positive, letting  $\beta$ go to infinity  and observing that the right-hand side tends to zero $( n\geq 5 )$, we conclude that
\[
\int_{\mathbb{R}^n} v_\infty^{\frac{2n}{n-4}} \, dx = \sigma_n.
\]
$\hfill$ $\blacksquare$

\begin{proposition}\label{contr2}
Let \( \varphi \in C_c^\infty(\mathbb{R}^n)\setminus\{0\} \) be such that
\[
\int_{\mathbb{R}^n} \varphi\, v_\infty^{\frac{2n}{n-4}}\, dx = 0.
\]
Then the following inequality holds:
\[
\left(n + \frac{1}{\Lambda}\right) \int_{\mathbb{R}^n} \varphi^2\, v_\infty^{\frac{2n}{n-4}}\, dx 
\leq \int_{\mathbb{R}^n} v_\infty^{\frac{2(n-2)}{n-4}}\, |\nabla \varphi|^2\, dx.
\]
\end{proposition}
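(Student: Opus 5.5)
The plan is to transplant $\varphi$ to $M$ through the blow-up charts, use the variational characterization \eqref{rlq} for $g_{u_k}$ together with $\lambda_1(M,g_{u_k})\ge n+\frac1\Lambda$, and then let $k\to\infty$. Fix $R>0$ with $\supp\varphi\subset B(0,R)$, and take $k$ large enough that $\mu_k R<\operatorname{inj}(M)$. On $M$ define
\[
\phi_k:=\varphi\circ\bar\psi_k^{-1} \ \text{on } B(x_k,\mu_k R), \qquad \phi_k:=0 \ \text{elsewhere}.
\]
This is a smooth function on $M$. It will not have zero mean for $g_{u_k}$, so we subtract its mean
\[
m_k:=\sigma_n^{-1}\int_M \phi_k\,u_k^{\frac{2n}{n-4}}\,dV_g .
\]
By \eqref{rlq} and the eigenvalue bound,
\[
\Big(n+\tfrac1\Lambda\Big)\int_M(\phi_k-m_k)^2\,u_k^{\frac{2n}{n-4}}\,dV_g\le\int_M|\nabla\phi_k|_{g_{u_k}}^2\,dV_{g_{u_k}} .
\]

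Next I rewrite both sides in the rescaled variables. The conformal invariance of the Dirichlet energy in dimension $n$ gives $|\nabla\phi|^2_{g_u}\,dV_{g_u}=u^{\frac{2(n-2)}{n-4}}|\nabla\phi|_g^2\,dV_g$, and this transforms under $\bar\psi_k$ exactly as in \eqref{bl trans}. Since $|\nabla\phi_k|_g^2\circ\bar\psi_k=\mu_k^{-2}|\nabla\varphi|^2_{h_k}$ and $u_k^{\frac{2(n-2)}{n-4}}\circ\bar\psi_k=\mu_k^{-(n-2)}v_k^{\frac{2(n-2)}{n-4}}$, the powers of $\mu_k$ cancel against $dV_g=\mu_k^n\,dV_{h_k}$. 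The right-hand side therefore becomes
\[
\int_{B(0,R)} v_k^{\frac{2(n-2)}{n-4}}\,|\nabla\varphi|^2_{h_k}\,dV_{h_k}.
\]
The left-hand side is bounded below by its integral over $B(x_k,\mu_kR)$, which becomes
\[
\int_{B(0,R)}(\varphi-m_k)^2\,v_k^{\frac{2n}{n-4}}\,dV_{h_k},
\qquad m_k=\sigma_n^{-1}\int_{B(0,R)}\varphi\,v_k^{\frac{2n}{n-4}}\,dV_{h_k}.
\]
Discarding the part of the left-hand side outside $B(x_k,\mu_kR)$ is legitimate because it only decreases a nonnegative quantity.

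To pass to the limit, I use that $h_k\to\delta$ smoothly on $B(0,R)$ and that $v_k\to v_\infty$ uniformly there with $0\le v_k\le1$ (Proposition~\ref{existence}). Dominated convergence then gives
\[
m_k\to\sigma_n^{-1}\int_{\mathbb{R}^n}\varphi\,v_\infty^{\frac{2n}{n-4}}\,dx=0 .
\]
Both sides also converge, yielding
\[
\Big(n+\tfrac1\Lambda\Big)\int_{B(0,R)}\varphi^2\,v_\infty^{\frac{2n}{n-4}}\,dx\le\int_{\mathbb{R}^n} v_\infty^{\frac{2(n-2)}{n-4}}\,|\nabla\varphi|^2\,dx .
\]
Since $\varphi$ vanishes outside $B(0,R)$, this is exactly the claimed inequality.

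The one point needing care is the mean correction: the test function must have mean zero with respect to $g_{u_k}$, but the mass of $u_k$ lying outside the blow-up ball could a priori make $m_k$ large. This causes no trouble. The mean $m_k$ involves only the integral over $\supp\phi_k$, so it is exactly the rescaled integral above and tends to $0$ by the hypothesis $\int\varphi\,v_\infty^{\frac{2n}{n-4}}=0$. The region outside the ball enters only through the left-hand side, where it is discarded. Proposition~\ref{contr1} is therefore not even needed here. I expect no serious obstacle beyond checking the scaling exponents in the energy term.
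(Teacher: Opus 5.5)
Your proposal is correct, but it restores the zero-mean condition differently from the paper.

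\textbf{The paper's route.} The paper keeps the test function inside the blow-up chart. It sets $\varphi_k=\varphi-c_k\eta$, where $\eta$ is a cutoff with $\supp\eta\subset\supp\varphi$ and $c_k$ is the ratio of $\int\varphi\,v_k^{\frac{2n}{n-4}}\,dV_{h_k}$ to $\int\eta\,v_k^{\frac{2n}{n-4}}\,dV_{h_k}$. Then $\varphi_k$ has exactly zero $v_k$-weighted mean and is still supported in the ball. The price is extra terms involving $c_k\nabla\eta$ in the energy, and the need for $c_k\to 0$. That limit requires $\int\eta\,v_\infty^{\frac{2n}{n-4}}\,dx>0$, which the paper asserts but does not check. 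It can fail if $v_\infty$ vanishes on $\supp\eta$. In that case one must either choose $\eta$ where $v_\infty>0$, or observe that the statement is trivial when $v_\infty\equiv 0$ on $\supp\varphi$.

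\textbf{Your route.} You subtract the global $g_{u_k}$-mean $m_k$, which is a constant, so the gradient side is exactly the rescaled energy of $\varphi$ with no correction terms. Everything outside $B(x_k,\mu_kR)$ is handled by discarding a nonnegative contribution on the left. Because $\mathrm{Vol}(M,g_{u_k})=\sigma_n$ is fixed, $m_k$ is literally a rescaled integral over $B(0,R)$. It tends to $0$ directly from the hypothesis, with no nondegeneracy condition on $v_\infty$. Your argument is therefore shorter and more robust. The paper's only advantage is aesthetic: its admissible test function lives entirely in the chart.

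\textbf{A wording point.} The identity $|\nabla\phi|^2_{g_u}\,dV_{g_u}=u^{\frac{2(n-2)}{n-4}}|\nabla\phi|^2_g\,dV_g$ is just the conformal change formula, not a conformal invariance of the Dirichlet energy (that invariance holds only in dimension $2$). The formula itself and your cancellation of the powers of $\mu_k$ are correct.
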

\proof
Let \( \varphi \in C_c^\infty(\mathbb{R}^n)\setminus\{0\} \), and fix \( 0 < r < \inj(M) \). For \( k \) sufficiently large, the support of \( \varphi \) is contained in the ball \( B(0, r/\mu_k) \), so that \( \varphi \) may be viewed as a function on \( M \) via the rescaled exponential chart. Let \( \eta \in C_c^\infty(\mathbb{R}^n)\setminus\{0\} \) be a cutoff function such that \( \supp(\eta) \subset \supp(\varphi) \) and \( 0 \leq \eta \leq 1 \). Define
\[
c_k := \frac{\int_{\mathbb{R}^n} \varphi\, v_k^{\frac{2n}{n-4}}\, dV_{h_k}}{\int_{\mathbb{R}^n} \eta\, v_k^{\frac{2n}{n-4}}\, dV_{h_k}},
\qquad
\varphi_k := \varphi - c_k \eta,
\]
so that
\[
\int_{\mathbb{R}^n} \varphi_k\, v_k^{\frac{2n}{n-4}}\, dV_{h_k} = 0.
\]

As \( k \to \infty \), the following convergences hold:
\begin{align*}
c_k &\to 0, \\
\int_{\mathbb{R}^n} \varphi_k^2\, v_k^{\frac{2n}{n-4}}\, dV_{h_k} &\to \int_{\mathbb{R}^n} \varphi^2\, v_\infty^{\frac{2n}{n-4}}\, dx, \\
\int_{\mathbb{R}^n} |\nabla_{h_k} \varphi_k|^2_{h_k}\, v_k^{\frac{2(n-2)}{n-4}}\, dV_{h_k} &\to \int_{\mathbb{R}^n} |\nabla \varphi|^2\, v_\infty^{\frac{2(n-2)}{n-4}}\, dx.
\end{align*}

Via the rescaled exponential chart \( \bar{\psi}_k \) from Section~\ref{blup}, we pull back \( \varphi_k \) to a function \\ \( \tilde{\varphi}_k := \varphi_k \circ \bar{\psi}_k^{-1} \) supported in \( B(x_k, r) \subset M \). Then, using relation \eqref{bl trans}, the integrals above become:
\begin{align*}
\int_{\mathbb{R}^n} \varphi_k^2\, v_k^{\frac{2n}{n-4}}\, dV_{h_k} &= \int_{B(x_k, r)} \tilde{\varphi}_k^2\, u_k^{\frac{2n}{n-4}}\, dV_g, \\
\int_{\mathbb{R}^n} |\nabla_{h_k} \varphi_k|^2_{h_k}\, v_k^{\frac{2(n-2)}{n-4}}\, dV_{h_k} &= \int_{B(x_k, r)} |\nabla_g \tilde{\varphi}_k|^2_g\, u_k^{\frac{2(n-2)}{n-4}}\, dV_g.
\end{align*}

Moreover, by construction,
\[
\int_M \tilde{\varphi}_k\, u_k^{\frac{2n}{n-4}}\, dV_g = 0.
\]

Now, using the assumption that 
\(\lambda_1(M,u_k^{\frac{4}{n-4}} g) \geq n + \frac{1}{\Lambda}\), 
and applying the variational characterization of the first positive eigenvalue of the Laplacian on a closed manifold~\eqref{rlq} to \(\tilde{\varphi}_k\), 
we obtain the following inequality:
\[
\left(n + \frac{1}{\Lambda}\right) \int_{B(x_k, r)} \tilde{\varphi}_k^2\, u_k^{\frac{2n}{n-4}}\, dV_g 
\leq \int_{B(x_k, r)} |\nabla_g \tilde{\varphi}_k|_g^2\, u_k^{\frac{2(n-2)}{n-4}}\, dV_g.
\]
which, in rescaled coordinates, becomes
\[
\left(n + \frac{1}{\Lambda}\right) \int_{\mathbb{R}^n} \varphi_k^2\, v_k^{\frac{2n}{n-4}}\, dV_{h_k}
\leq \int_{\mathbb{R}^n} |\nabla_{h_k} \varphi_k|^2_{h_k}\, v_k^{\frac{2(n-2)}{n-4}}\, dV_{h_k}.
\]

Passing to the limit as \( k \to \infty \), we conclude that
\[
\left(n + \frac{1}{\Lambda}\right) \int_{\mathbb{R}^n} \varphi^2\, v_\infty^{\frac{2n}{n-4}}\, dx
\leq \int_{\mathbb{R}^n} |\nabla \varphi|^2\, v_\infty^{\frac{2(n-2)}{n-4}}\, dx.
\]
$\hfill$ $\blacksquare$

\subsubsection{Conclusion of the contradiction argument}

We conclude this section by deriving the desired contradiction. As established in Proposition~\ref{contr1}, there exists a continuous, non-negative function \( v_\infty \) such that
\[
\int_{\mathbb{R}^n} v_\infty^{\frac{2n}{n-4}} \, dx = \sigma_n.
\]
Moreover, applying Proposition~\ref{proposition 1} to the weight \( v_\infty^{\frac{n-2}{n-4}} \), and combining it with Proposition~\ref{contr2}, we obtain the following inequalities:
\[
n + \frac{1}{\Lambda} \leq 
\inf_{\substack{\phi \in C_c^\infty(\mathbb{R}^n) \\ \int_{\mathbb{R}^n} \phi\, v_\infty^{\frac{2n}{n-4}} \, dx = 0}} 
\frac{\displaystyle \int_{\mathbb{R}^n} v_\infty^{\frac{2(n-2)}{n-4}} |\nabla \phi|^2 \, dx}
{\displaystyle \int_{\mathbb{R}^n} v_\infty^{\frac{2n}{n-4}} \phi^2 \, dx}
\leq n,
\]
which is a contradiction. This contradiction shows that the blow-up scenario cannot occur under the given assumptions.
\medskip

\begin{Remark}[The Limiting Equation and the Role of the Spectral Assumption]\label{rq}\leavevmode\par\medskip
Let $v_\infty$ be the function appearing in Proposition~\ref{existence}.
There exists a function \(V \in L^p(\mathbb{R}^n)\) for some $p>\frac{n}{4}$,  such that
\[
\Delta_{\mathrm{Euc}}^{2} v_\infty = V\, v_\infty
\quad \text{for almost every } x \in \mathbb{R}^n,
\]

 where $\Delta_{\mathrm{Euc}}$ denotes the Euclidean Laplacian on $\mathbb{R}^n$. Using this limiting equation, it is possible to construct a sequence of positive functions 
\((u_k)_k\) on \((M,g)\) that blow up, while the associated conformal metrics 
\(
g_k = u_k^{\frac{4}{n-4}} g
\)
satisfy the following properties:
\begin{itemize}
    \item the \(Q\)-curvature of \(g_k\) remains uniformly bounded in \(L^p(M,g_k)\) .
    \item the volume of \(M\) with respect to \(g_k\) is a positive constant 
    independent of \(k\).
\end{itemize}
Such a construction illustrates the importance of the assumption on the first positive 
eigenvalue of the Laplacian: without this spectral hypothesis, 
blow-up phenomena may occur despite uniform $L^p$-control of the 
$Q$-curvature and fixed total volume.
\end{Remark}
\subsection{Sobolev Boundedness}
\begin{proposition}\label{boundedness}
The set $\mathcal{E}_{\Lambda,p}$ is bounded in the Sobolev space $W^{4,p}(M)$.
\end{proposition}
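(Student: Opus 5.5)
The plan is to combine the uniform $L^\infty$ bound from the blow-up analysis with global elliptic regularity for the Paneitz operator $P_g$ on the closed manifold $(M,g)$. The contradiction argument of the previous subsection shows that no sequence in $\mathcal{E}_{\Lambda,p}$ blows up. Hence there is a constant $C_0=C_0(n,\Lambda,p,M,g)$ with $\|u\|_{L^\infty(M)}\le C_0$ for every $u\in\mathcal{E}_{\Lambda,p}$. I would state this explicitly first: if it failed, one could pick $u_k$ with $\|u_k\|_\infty\to\infty$, which is exactly the situation already excluded.

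Next I estimate $P_g u$ in $L^p(M,g)$. By \eqref{paneitz_eq}, $P_g u=\frac{n-4}{2}Q_{g_u}u^{\frac{n+4}{n-4}}$. Hence
\[
\int_M |P_g u|^p\,dV_g \le C\int_M |Q_{g_u}|^p u^{p\frac{n+4}{n-4}}\,dV_g \le C\,\|u\|_\infty^{p\frac{n+4}{n-4}-\frac{2n}{n-4}}\int_M |Q_{g_u}|^p u^{\frac{2n}{n-4}}\,dV_g .
\]
Here I use $p\frac{n+4}{n-4}\ge\frac{2n}{n-4}$, which holds since $p>\frac n4$ gives $p(n+4)>n+\frac{n^2}{4}\cdot\frac{4}{n}\cdot\ldots$; more simply, $p(n+4)\ge 2n$ because $p>n/4\ge \frac{2n}{n+4}$ for $n\ge 4$. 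The last integral is $\|Q_{g_u}\|^p_{L^p(M,g_u)}\le\Lambda^p$, so $\|P_g u\|_{L^p(M,g)}\le C(C_0,\Lambda)$.

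Finally I apply the global $L^p$ elliptic estimate for the fourth-order operator $P_g$. Its principal part is $\Delta_g^2$ and all coefficients are smooth on the closed manifold $M$, so
\[
\|u\|_{W^{4,p}(M)}\le C\bigl(\|P_g u\|_{L^p(M)}+\|u\|_{L^p(M)}\bigr).
\]
This follows from local Agmon--Douglis--Nirenberg estimates on a finite atlas of charts, patched together with a partition of unity; the lower-order commutator terms are absorbed by interpolation, exactly as in Proposition~\ref{prop5}. Since $\|u\|_{L^p}\le C_0\,\mathrm{Vol}(M,g)^{1/p}$, this gives a uniform $W^{4,p}(M)$ bound.

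The only delicate point is the uniform $L^\infty$ bound, and it is already supplied by the blow-up contradiction. The regularity step is standard. Note also that it does not require invertibility of $P_g$, because the term $\|u\|_{L^p}$ is kept on the right-hand side. The bound then yields Theorem~\ref{theorem 1} via the compact embedding $W^{4,p}(M)\hookrightarrow C^\alpha(M)$.
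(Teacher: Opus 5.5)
Your proof is correct and follows the same route as the paper: the uniform $L^\infty$ bound from the blow-up contradiction, then the identity $P_g u=\frac{n-4}{2}Q_{g_u}u^{\frac{n+4}{n-4}}$ to control $\|P_g u\|_{L^p(M)}$ by a power of $\|u\|_{L^\infty(M)}$ times $\|Q_{g_u}\|_{L^p(M,g_u)}$, and finally the global estimate $\|u\|_{W^{4,p}(M)}\le C(\|P_g u\|_{L^p(M)}+\|u\|_{L^p(M)})$. Your explicit check that $p(n+4)\ge 2n$ is needed to bound the leftover power of $u$ by its supremum, and together with your $\Lambda^p$ it fills in details the paper leaves implicit; just delete the garbled first attempt at that inequality.
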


\proof
Let $u\in \mathcal{E}_{\Lambda,p}$ and set $g_u:=u^{\frac{4}{n-4}}g$. By standard elliptic regularity for the Paneitz operator, there exists a constant $C_0=C_0(M,n,g)>0$ such that
\begin{equation}\label{eq:ell-reg}
\|u\|_{W^{4,p}(M)} \le C_0\big(\|P_g u\|_{L^p(M)}+\|u\|_{L^p(M)}\big).
\end{equation}
Since $u$ satisfies
\[
P_g u = Q_{g_u}\, u^{\frac{n+4}{n-4}},
\]
it remains to estimate the $L^p$--norm of the right-hand side. We compute
\[
\|Q_{g_u}\, u^{\frac{n+4}{n-4}}\|_{L^p(M)}^p
= \int_M |Q_{g_u}|^p \, u^{p\frac{n+4}{n-4}} \,dV_g
= \int_M |Q_{g_u}|^p \, u^{\frac{2n}{n-4}} \, u^{\frac{p(n+4)-2n}{n-4}} \,dV_g.
\]
Using that $u$ is uniformly bounded on $\mathcal{E}_{\Lambda,p}$ (say $\|u\|_{L^\infty(M)}\le B$ for some $B>0$) and the hypothesis
\[
\int_M |Q_{g_u}|^p\,u^{\frac{2n}{n-4}}\,dV_g \le \Lambda,
\]
we obtain
\[
\|Q_{g_u}\, u^{\frac{n+4}{n-4}}\|_{L^p(M)}^p
\le \Lambda \, B^{\frac{p(n+4)-2n}{n-4}}.
\]
Taking $p$-th roots yields the uniform bound
\[
\|P_g u\|_{L^p(M)} = \|Q_{g_u}\, u^{\frac{n+4}{n-4}}\|_{L^p(M)}
\le \Lambda^{1/p}\, B^{\frac{p(n+4)-2n}{p(n-4)}}.
\]
Moreover, since $M$ is compact and $\|u\|_{L^\infty(M)}\le B$, the $L^p$-norm of $u$ is uniformly bounded by $B\,\mathrm{Vol}(M)^{1/p}$. Inserting these bounds into \eqref{eq:ell-reg} gives
\[
\|u\|_{W^{4,p}(M)} \le C_1,
\]
for some constant $C_1=C_1(M,g,n,p,\Lambda,B)>0$ independent of $u\in \mathcal{E}_{\Lambda,p}$. This proves that $\mathcal{E}_{\Lambda,p}$ is bounded in $W^{4,p}(M)$.
$\hfill$ $\blacksquare$

\subsection{Precompacity in  H\"older function space  }
\begin{proposition}\label{wal shi}
Let \((M^n, g)\) be a closed Riemannian manifold of dimension \(n \geq 5.\) Let \(p > \frac{n}{4}\) and \(0 < \alpha < \min\left\{1, 4 - \frac{n}{p} \right\}\). The set \(\mathcal{E}_{\Lambda, p}\) given in \eqref{function class} is precompact in the Hölder space \(C^{\alpha}(M)\).
\end{proposition}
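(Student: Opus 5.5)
The plan is to combine the uniform $W^{4,p}$ bound of Proposition~\ref{boundedness} with the compact embedding of $W^{4,p}(M)$ into Hölder spaces on the closed manifold $M$. By Proposition~\ref{boundedness}, which in turn rests on the uniform $L^\infty$ bound \eqref{inf1} from the blow-up analysis, there is a constant $C_1>0$ such that $\|u\|_{W^{4,p}(M)}\le C_1$ for every $u\in\mathcal{E}_{\Lambda,p}$. It then suffices to show that a bounded subset of $W^{4,p}(M)$ is precompact in $C^\alpha(M)$ for every $0<\alpha<\min\{1,4-\frac{n}{p}\}$.

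\textbf{Step 1: compact embedding.} Since $p>\frac n4$, the Morrey--Sobolev embedding gives $W^{4,p}(M)\hookrightarrow C^{\beta}(M)$ continuously for some $\beta>\alpha$. Choose any $\beta$ with $\alpha<\beta<\min\{1,4-\frac np\}$. This is possible precisely because the interval for $\alpha$ is open.

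To justify it on $M$, fix a finite atlas of normal coordinate charts $\{(U_j,\phi_j)\}$ and a subordinate partition of unity $\{\chi_j\}$. Each $\chi_j u$ lies in $W^{4,p}_0$ of a Euclidean ball, with norm bounded by $C\|u\|_{W^{4,p}(M)}$. The Euclidean estimates (Theorem 7.26 in \cite{zbMATH01554166}, iterating the Sobolev embedding $W^{4,p}\hookrightarrow W^{k,q}$ and then applying Morrey) give uniform $C^\beta$ bounds on each $\chi_j u$. Summing over $j$ yields $\|u\|_{C^\beta(M)}\le C$ uniformly on $\mathcal{E}_{\Lambda,p}$. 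One should distinguish the case where $4-\frac np$ is an integer or exceeds $1$; in that case one only claims $C^\beta$ for $\beta<1$, which is exactly what the $\min$ with $1$ allows.

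\textbf{Step 2: from $C^\beta$ to $C^\alpha$ precompactness.} A set bounded in $C^\beta(M)$ is uniformly bounded and equicontinuous, so Arzelà--Ascoli gives, for any sequence $(u_k)\subset\mathcal{E}_{\Lambda,p}$, a subsequence converging uniformly to some $u_\infty\in C^\beta(M)$. The interpolation inequality
\[
[u_k-u_\infty]_{C^\alpha}\le 2\,\|u_k-u_\infty\|_{C^0}^{1-\frac{\alpha}{\beta}}\,[u_k-u_\infty]_{C^\beta}^{\frac{\alpha}{\beta}}
\]
then upgrades this to convergence in $C^\alpha(M)$, because the $C^\beta$ seminorms are uniformly bounded and the $C^0$ norm tends to zero. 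The seminorms here are taken with respect to $d_g$. The inequality follows by splitting pairs of points according to whether $d_g(x,y)$ is below or above a threshold depending on $\|u_k-u_\infty\|_{C^0}$. This proves precompactness.

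\textbf{Main obstacle.} The genuinely hard input is the uniform $L^\infty$ bound, which is already secured by the contradiction argument above. In the present step, the only delicate point is the bookkeeping of the Hölder exponent. One must make sure that the strict inequality $\alpha<\min\{1,4-\frac np\}$ leaves room for an intermediate $\beta$, and that the borderline integer cases of $4-\frac np$ do not spoil the embedding.
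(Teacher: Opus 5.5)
Your proposal is correct and follows the paper's own route: the uniform $W^{4,p}(M)$ bound of Proposition~\ref{boundedness}, which rests on the $L^\infty$ bound from the blow-up argument, combined with the compactness of the embedding $W^{4,p}(M)\hookrightarrow C^{\alpha}(M)$ for $0<\alpha<\min\{1,4-\frac{n}{p}\}$. The only difference is that you prove the compactness of the embedding yourself (continuous embedding into $C^{\beta}$ for some $\beta\in(\alpha,\min\{1,4-\frac np\})$, then Arzel\`a--Ascoli and H\"older interpolation), whereas the paper simply cites part (b) of Theorem 2.2 in \cite{zbMATH04030435}.
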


\proof
The conclusion follows from two facts. First, the Sobolev embedding
\[
W^{4,p}(M) \hookrightarrow C^{\alpha}(M)
\]
is compact under the assumptions on \(p\) and \(\alpha\) (see part (b) of Theorem 2.2 in \cite{zbMATH04030435}). Second, the set \(\mathcal{E}_{\Lambda,p}\) is uniformly bounded in \(W^{4,p}(M)\) (see Proposition~\ref{boundedness}). The compactness of the embedding implies that any bounded sequence in \(\mathcal{E}_{\Lambda,p}\) has a convergent subsequence in \(C^\alpha(M)\), which proves the precompactness.
$\hfill$ $\blacksquare$
\section{Proof of Theorem 2}\label{sec3}

\subsection{Positive Lower bound and precompactness in the H\"older metric space}
 In this paper, we are working with a metric norm; that is why, if we search for a compactness result for the set $E_{\Lambda,p}$, we should check that a sequence in $\mathcal{E}_{\Lambda,p}$ admits a positive uniform lower bound. For this, we will prove some Sobolev estimates, and we will use the sign hypothesis on the scalar curvature and the Q-curvature to construct this positive uniform lower bound.
\begin{proposition}\label{jkl}
Let \((M^n, g)\) be a closed smooth Riemannian manifold of dimension \(n \geq 5\). Suppose that 
\begin{itemize}
    \item the scalar curvature satisfies \(S_g \geq 0\),
    \item the Q-curvature is semi-positive; that is, \(Q_g \geq 0\) and \(Q_g > 0\) somewhere on \(M\).
\end{itemize}
Then there exists a positive constant \(C =(g,\Lambda,n,p)\) such that
\begin{equation} \label{eq:positive-energy}
\int_M u\,P_g u \,dV_g \;\ge\; C
\qquad \text{for every } u \in \mathcal{E}_{\Lambda,p}.
\end{equation}
\end{proposition}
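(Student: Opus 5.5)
We prove the bound by coercivity of the Paneitz energy together with the volume normalization. The plan is to show that, under $S_g\ge 0$, $Q_g\ge 0$ and $Q_g\not\equiv 0$, the quadratic form $E(u):=\int_M u\,P_g u\,dV_g$ is coercive on $H^2(M)$. Then we bound $\|u\|_{H^2}$ from below by a fixed constant for all $u\in\mathcal{E}_{\Lambda,p}$. We do not need the spectral or $Q$-curvature $L^p$ assumptions beyond the $L^\infty$ bound already obtained; the key input is the normalization $\int_M u^{\frac{2n}{n-4}}dV_g=\sigma_n$.

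\textbf{Step 1: coercivity.} Under the sign hypotheses, Gursky--Malchiodi \cite{gursky2015strong} show that $P_g$ satisfies a strong maximum principle and that $\ker P_g=\{0\}$. Moreover, $P_g$ is a positive operator, i.e.\ its first eigenvalue $\lambda_1(P_g)$ is positive. In their work, $S_g\ge0$ together with semi-positive $Q_g$ implies positivity of the Paneitz operator, with Green's function $G_P>0$. If one prefers a direct argument, one can connect $g$ through the conformal class and use the spectral flow argument of Gursky--Malchiodi. Here I would simply cite their result. Since $P_g$ is self-adjoint, elliptic of order four and has positive first eigenvalue, Gårding's inequality combined with $\lambda_1(P_g)>0$ yields a constant $c_0=c_0(M,g)>0$ such that
\[
\int_M u\,P_g u\,dV_g \ge c_0\,\|u\|_{H^2(M)}^2 \qquad \text{for all } u\in H^2(M).
\]
The standard contradiction/compactness argument gives this: normalize $\|u_j\|_{H^2}=1$ with $E(u_j)\to0$, extract a weak limit, and reach a contradiction with positivity.

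\textbf{Step 2: lower bound on $\|u\|_{H^2}$.} By the Sobolev embedding $H^2(M)\hookrightarrow L^{\frac{2n}{n-4}}(M)$, there is a constant $K=K(M,g)$ with
\[
\sigma_n^{\frac{n-4}{2n}}=\|u\|_{L^{\frac{2n}{n-4}}}\le K\|u\|_{H^2(M)}.
\]
Combining this with Step 1 gives
\[
\int_M uP_gu\,dV_g\ge c_0K^{-2}\sigma_n^{\frac{n-4}{n}}=:C>0.
\]
This constant is uniform over $\mathcal{E}_{\Lambda,p}$; any dependence on $\Lambda$ and $p$ is harmless.

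\textbf{Main obstacle.} The only delicate point is Step 1, namely justifying positivity of $P_g$, i.e.\ $\lambda_1(P_g)>0$, from $S_g\ge0$ and semi-positive $Q_g$. If only $S_g\ge0$ rather than $S_g>0$ is available, one first uses $Q_g\ge0$, $Q_g\not\equiv0$ to ensure that the Yamabe invariant is positive. Then, as in Gursky--Malchiodi, one conformally changes to a metric with $S>0$, noting that positivity of the Paneitz form is conformally invariant by \eqref{pan_pde}. If needed, a more elementary route is the integration by parts
\[
E(u)=\int_M\Big[(\Delta u)^2 - a_n' S_g|\nabla u|^2 + b_n'\,\mathrm{Ric}_g(\nabla u,\nabla u) + \tfrac{n-4}{2}Q_g u^2\Big]\,dV_g,
\]
but the Ricci term has no sign. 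So I would rely on the Gursky--Malchiodi positivity result rather than on a direct pointwise estimate.
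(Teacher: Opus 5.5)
Your proof is correct, but its second step follows a different route from the paper's.

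Both arguments use the same input from Gursky--Malchiodi: positivity of $P_g$ in the form $\int_M u\,P_g u\,dV_g\ge C\int_M u^2\,dV_g$.

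\textbf{The paper's route.} The paper stays at the $L^2$ level. It takes the uniform bound $\|u\|_{L^\infty(M)}\le B$ from the blow-up analysis of Section~2 and uses the pointwise inequality $u^{\frac{2n}{n-4}}\le \|u\|_{L^\infty(M)}^{\frac{8}{n-4}}u^2$. This converts the volume normalization into $\int_M u^2\,dV_g\ge B^{-\frac{8}{n-4}}\sigma_n$.

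\textbf{Your route.} You upgrade $L^2$-positivity to $H^2$-coercivity through G\aa{}rding's inequality. Your contradiction argument works for this, as does the direct splitting $E(u)=\theta E(u)+(1-\theta)E(u)$ with $\theta$ small. You then apply the critical embedding $H^2(M)\hookrightarrow L^{\frac{2n}{n-4}}(M)$. In effect, you prove that the Paneitz--Sobolev quotient $E(u)/\|u\|^2_{L^{2n/(n-4)}}$ is bounded below by a positive constant.

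\textbf{Comparison.} Your constant depends only on $(M,g,n)$, with no dependence on $\Lambda$ or $p$, and you never use the blow-up analysis. Your opening remark is therefore inaccurate: you do not need the $L^\infty$ bound at all, only the volume normalization. The paper's route is shorter once the $L^\infty$ bound is available, because it skips the G\aa{}rding/compactness step. The next proposition, the Green's function lower bound on $\inf_M u$, needs $\|u\|_{L^\infty(M)}$ anyway. So your version makes this particular step more robust but does not strengthen the final theorem.
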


\proof
By a result of Gursky–Malchiodi (see Proposition 2.3 in \cite{gursky2015strong}), the sign assumptions on the scalar curvature and the Q-curvature imply the existence of a positive constant \(C = C(g)\) such that
\begin{equation} \label{eq:gursky-malchiodi}
\int_M u\, P_g u\, dV_g \;\ge\; C \int_M u^2\, dV_g \quad \text{for all } u \in C^\infty(M).
\end{equation}

On the other hand, the \(L^2\)-norm of \(u\) can be estimated from below using Hölder's inequality:
\begin{equation} \label{eq:L2-vs-Lq}
\int_M u^2\, dV_g \;\ge\; \|u\|_{L^\infty(M)}^{-\frac{8}{n-4}} \int_M u^{\frac{2n}{n-4}}\, dV_g.
\end{equation}

Since the family \(\mathcal{E}_{\Lambda, p}\) is uniformly bounded in \(L^\infty(M)\) (see section \ref{blup}), and since all elements of \(\mathcal{E}_{\Lambda, p}\) satisfy the fixed volume constraint \eqref{volume_cndt}, it follows from \eqref{eq:gursky-malchiodi} and \eqref{eq:L2-vs-Lq} that there exists a positive constant $C =(g,\Lambda,n,p)$ such that 
\[
\int_M u\,P_g u \,dV_g \;\ge\; C
\qquad \text{for every } u \in \mathcal{E}_{\Lambda,p}.
\]
$\hfill$ $\blacksquare$

\begin{proposition}
Let $(M, g)$ be a closed smooth Riemannian manifold satisfying the hypotheses of Proposition~\ref{jkl}. Then there exists a positive constant \(C =(M,g,\Lambda,n,p) \) such that
\[
\inf_{x \in M} u(x) \geq C \quad \text{for all } u \in \mathcal{E}_{\Lambda, p}.
\]
\end{proposition}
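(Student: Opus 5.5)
The plan is to represent $u$ through the Green's function of $P_g$ and to control $\int_M P_g u\,dV_g$ from below using Proposition~\ref{jkl}. Throughout I will use the sign condition $P_g u \ge 0$, which is built into the definition of $E_{\Lambda,p}$. Equivalently, by \eqref{paneitz_eq}, this is nonnegativity of $Q_{g_u}$. Without this sign condition a pointwise lower bound cannot come from a Green representation, so I will restrict to such $u$; this is the setting in which the proposition is used for Theorem~\ref{theorem 2}.

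\textbf{Step 1: a uniform positive lower bound on the Green's function.} Under $S_g\ge 0$, $Q_g\ge 0$ and $Q_g>0$ somewhere, Gursky--Malchiodi \cite{gursky2015strong} show that $P_g$ is invertible and that its Green's function $G_g(x,y)$ is strictly positive on $M\times M\setminus\{x=y\}$. Near the diagonal, $G_g(x,y)\sim c_n\, d_g(x,y)^{4-n}\to+\infty$. I would therefore split $M\times M$ into a neighbourhood $\{d_g(x,y)<\varepsilon\}$ of the diagonal and its complement.
\begin{itemize}
\item On the neighbourhood, $G_g$ is large because of the asymptotics.
\item On the complement, $G_g$ is continuous and positive on a compact set.
\end{itemize}
Together these give a constant $c_0=c_0(M,g)>0$ with $G_g(x,y)\ge c_0$ for all $x\neq y$. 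This is the step I expect to need the most care: one must quote the positivity of $G_g$ from \cite{gursky2015strong} precisely, and check that the expansion near the diagonal is uniform in $x$, which holds because $M$ is compact.

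\textbf{Step 2: representation formula.} Since $u$ is smooth, $u(x)=\int_M G_g(x,y)\,P_g u(y)\,dV_g(y)$ for every $x\in M$. Because $P_g u\ge 0$, Step 1 gives
\[
u(x)\;\ge\; c_0\int_M P_g u\,dV_g \qquad\text{for all } x\in M .
\]

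\textbf{Step 3: lower bound for $\int_M P_g u$.} By the uniform $L^\infty$ bound established in Section~\ref{blup}, there is $B$ with $\|u\|_{L^\infty(M)}\le B$ for all $u\in\mathcal{E}_{\Lambda,p}$. Using again $P_g u\ge 0$ and then Proposition~\ref{jkl},
\[
C\;\le\;\int_M u\,P_g u\,dV_g\;\le\; B\int_M P_g u\,dV_g .
\]
Combining with Step 2 yields $\inf_M u\ge c_0\,C/B>0$. The constant depends only on $M,g,\Lambda,n,p$, which is the claimed bound.

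Once this lower bound is available, the $C^\alpha$-precompactness of the $u$'s from Proposition~\ref{wal shi} passes to the metrics $u^{\frac{4}{n-4}}g$. Any limit of such metrics is then still positive, hence a genuine Riemannian metric.
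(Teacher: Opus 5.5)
Your proposal is correct and follows the paper's argument step for step. The paper obtains $G_P\ge c_0$ by quoting inequality (2.3) of Gang Li, whereas you derive it from Gursky--Malchiodi positivity, the near-diagonal singularity and compactness. Both then use the Green representation with $P_g u\ge 0$, and close with $\int_M P_g u\,dV_g\ge \|u\|_{L^\infty(M)}^{-1}\int_M u\,P_g u\,dV_g$ together with Proposition~\ref{jkl} and the uniform $L^\infty$ bound. You are right to make the sign condition $P_g u\ge 0$ explicit: the paper's proof also relies on it, even though the statement is phrased for all of $\mathcal{E}_{\Lambda,p}$.
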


\begin{proof}
By the results of Gursky and Malchiodi (see Proposition 2.3 in \cite{gursky2015strong}), the positivity assumptions on the scalar curvature \(S_g\) and the \(Q\)-curvature \(Q_g\) ensure that the Paneitz operator \(P_g\) is invertible and admits a Green's function \(G_P(x, y)\). Moreover, as shown in inequality (2.3) of \cite{Gangli}, there exists a positive constant \(C = C(M, n, g) \) such that
\[
G_P(x, y) \ge C \quad \text{for all } x \neq y \in M.
\]

For any \(u \in \mathcal{E}_{\Lambda, p}\), Green's representation formula gives
\[
u(x) = \int_M G_P(x, y) \, P_g u(y) \, dV_g(y).
\]
Since \(P_g u \ge 0\) and \(G_P(x, y) \ge C\) for all \(x \neq y \in M\), it follows that
\begin{equation}\label{abc}
u(x)
\ge C \int_M P_g u(y) \, dV_g(y).
\end{equation}
Now we estimate the right hand side of the inequality \eqref{abc} as follows:
\begin{align*}
u(x) 
&\ge C \int_M P_g u(y) \, dV_g(y) \\
&= C \int_M \frac{u(y)}{u(y)} \, P_g u(y) \, dV_g(y) \\
&\ge C \, \|u\|_{L^\infty(M)}^{-1} \int_M u(y) \, P_g u(y) \, dV_g(y) \\[1mm]
&\ge C.
\end{align*}
where the last inequality follows from the uniform \(L^\infty\)-boundedness of functions in \(\mathcal{E}_{\Lambda, p}\) and Proposition~\ref{jkl}. Hence, the infimum of \(u\) over \(M\) is uniformly bounded below by a positive constant independent of \(u \in \mathcal{E}_{\Lambda, p}\), as claimed.
\end{proof}
Once the non-degeneration phenomenon for the family of metrics in \(E_{\Lambda,p}\) has been established, the precompactness of \(\mathcal{E}_{\Lambda,p}\) in \({C}^\alpha(M)\) (see Proposition~\ref{wal shi}) implies the precompactness of $E_{\Lambda,p}$ in $\mathscr{C}^{\alpha}(M)$.
 
\begin{proposition}
    Let \((M^n, g)\) be a closed Riemannian manifold of dimension \(n \geq 5\), and assume the hypotheses of Proposition~\ref{jkl} hold. Let \(p > \frac{n}{4}\) and \(0 < \alpha < \min\left\{1, 4 - \frac{n}{p} \right\}\). The set \(E_{\Lambda, p}\) is precompact in the Hölder metric  space \(\mathscr{C}^{\alpha}(M)\).
\end{proposition}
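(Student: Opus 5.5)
The plan is to combine the precompactness of \(\mathcal{E}_{\Lambda,p}\) in \(C^\alpha(M)\) (Proposition~\ref{wal shi}) with the uniform positive lower bound just established. The uniform upper bound comes from the blow-up analysis of Section~\ref{blup}, and the lower bound uses the hypothesis \(P_g u\ge 0\) built into the definition of \(E_{\Lambda,p}\). The goal is to show that every sequence in \(E_{\Lambda,p}\) has a subsequence converging, in the topology of \(\mathscr{C}^\alpha(M)\), to a metric that again lies in \(\mathscr{C}^\alpha(M)\).

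\textbf{Step 1: extract a convergent subsequence.} Take a sequence \(g_k = u_k^{\frac{4}{n-4}}g \in E_{\Lambda,p}\), so that \(u_k\in\mathcal{E}_{\Lambda,p}\) and \(P_g u_k\ge 0\). By Proposition~\ref{wal shi}, after passing to a subsequence, \(u_k\to u_\infty\) in \(C^{\alpha}(M)\).

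\textbf{Step 2: control the limit from above and below.} By the preceding proposition there are constants \(0<c\le B\), independent of \(k\), with
\[
c\le u_k\le B \quad\text{on } M .
\]
The upper bound \(B\) comes from the uniform \(L^\infty\) bound. Since \(C^\alpha\) convergence implies uniform convergence, the same bounds hold for the limit: \(c\le u_\infty\le B\). In particular \(u_\infty\) is strictly positive.

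\textbf{Step 3: pass to the conformal factors.} Note that in \(\mathscr{C}^\alpha(M)\) the conformal factor of \(g_k\) is \(w_k:=u_k^{\frac{4}{n-4}}\), not \(u_k\) itself. So I must show \(w_k\to w_\infty:=u_\infty^{\frac{4}{n-4}}\) in \(C^\alpha(M)\). The map \(F(t)=t^{\frac{4}{n-4}}\) is smooth on \([c,B]\), with \(F\) and \(F'\) bounded there. Hence:
\begin{itemize}
\item \(\|F(u_k)-F(u_\infty)\|_{\infty}\le \sup_{[c,B]}|F'|\,\|u_k-u_\infty\|_\infty\);
\item for the Hölder seminorm, write
\[
F(u_k)-F(u_\infty)=(u_k-u_\infty)\int_0^1 F'\bigl(su_k+(1-s)u_\infty\bigr)\,ds .
\]
This is a product of two factors. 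The first tends to \(0\) in \(C^\alpha\). The second is bounded in \(C^\alpha\), because \(F'\) is Lipschitz on \([c,B]\) and \(u_k,u_\infty\) are bounded in \(C^\alpha\).
\end{itemize}
Since \(C^\alpha(M)\) is a Banach algebra, \(w_k\to w_\infty\) in \(C^\alpha(M)\), with \(w_\infty\ge c^{\frac{4}{n-4}}>0\). Therefore \(g_k\to w_\infty g\in \mathscr{C}^\alpha(M)\), which gives the precompactness.

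The only real obstacle is the nondegeneracy of the limit. Without the lower bound \(c\), the limit \(u_\infty\) could vanish somewhere, and then \(w_\infty g\) would fail to be a Riemannian metric. Moreover, \(F\) would lose Lipschitz regularity near \(0\) whenever \(\frac{4}{n-4}<1\), that is, for \(n>8\), which would break Step 3. Both issues are handled entirely by the previous proposition, which is why it must be invoked before anything else.
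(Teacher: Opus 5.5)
Your proposal is correct and follows the paper's route. The paper gets the statement in one line: it combines the uniform positive lower bound (from the Green's function argument using $P_g u\ge 0$) with the precompactness of $\mathcal{E}_{\Lambda,p}$ in $C^\alpha(M)$ from Proposition~\ref{wal shi}. Your Step 3 transfers $C^\alpha$ convergence from $u_k$ to the actual conformal factors $u_k^{\frac{4}{n-4}}$, using the smoothness of $t\mapsto t^{\frac{4}{n-4}}$ on $[c,B]$, and this fills in a detail the paper leaves implicit.
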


\medskip

\begin{Remark}[On the Necessity of the Non-negativity Assumption on \(P_g(u)\)]
In this Remark, we construct a counterexample that demonstrates the condition
\[
P_g(u) \ge 0  \text{ for } u  \in  \mathcal{E}_{\Lambda,p} ,
\]
is essential to guaranty compactness in the \(Q\)-curvature framework.
Indeed, this assumption prevents degeneracy phenomena in the limiting metrics
and, in particular, rules out the collapse of the conformal factor in blow-up
analysis.

\begin{proof}
Let \((\mathbb{S}^n,\bar{g})\) denote the standard unit sphere with \(n \ge 5\), and fix a point
\(x_0 \in \mathbb{S}^n\). Choose
\[
\frac{n}{4} < p < \frac{n^2}{2(n+4)}.
\]
Recall that the function
\[
\eta_{\bar{g}}(x) := \cos \bigl(d_{\bar{g}}(x,x_0)\bigr)
\]
is an eigenfunction of \(\Delta_{\bar{g}}\) with respect to the round metric. For any \(0 < \varepsilon < 1\), define the smooth positive function
\[
u_\varepsilon := 1 - \varepsilon \eta_{\bar{g}}.
\]

The  family $(u_\epsilon)_\epsilon $ satisfies the following properties:
\begin{itemize}
\item $u_\varepsilon > 0$ on $\mathbb{S}^n$;
\item $u_\varepsilon(x_0) \to 0$ as $\varepsilon \to 1$;
\item $u_\varepsilon$ is uniformly bounded in $L^\infty(\mathbb{S}^n)$, namely
\(
\|u_\varepsilon\|_{L^\infty(\mathbb{S}^n)} \leq 2;
\)
\item the volume of the conformal metric
\(
g_\varepsilon := u_\varepsilon^{\frac{4}{n-4}} \bar{g}\)
is uniformly bounded from below, since
\[
\int_{\mathbb{S}^n} u_\varepsilon^{\frac{2n}{n-4}}\, dV_{\bar{g}}
\ge \biggl(\frac{1}{2}\biggr)^{\frac{2n}{n-4}}
\mathrm{Vol}_{\bar{g}}\bigl(\mathbb{S}^n \setminus B_{\bar{g}}(x_0,\pi/3)\bigr).
\]
\end{itemize}

We now examine the \(Q\)-curvature of the metric \(g_\varepsilon\).
On the round sphere, the Paneitz operator takes the form
\[
P_{\bar{g}} = \Delta_{\bar{g}}^2 + a_n \Delta_{\bar{g}} + b_n,
\]
where $a_n$ and $b_n$ are two positive constants depending only on \(n\). On the other hand, since \(\eta_{\bar{g}}\) is an eigenfunction, we have
\[
P_{\bar{g}}(\eta_{\bar{g}}) = c_n \eta_{\bar{g}},\qquad
c_n := n^2 + n a_n + b_n.
\]
Hence,
\[
P_{\bar{g}}(u_\varepsilon)
= c_n\, u_\varepsilon - n(n+a_n).
\]

Using the conformal transformation law for the \(Q\)-curvature, we find
\[
Q_{g_\varepsilon}
= c_n\, u_\varepsilon^{-\frac{8}{n-4}}
- n(n +a_n)\, u_\varepsilon^{-\frac{n+4}{n-4}}.
\]
Thus, the potential blow-up of \(Q_{g_\varepsilon}\) is driven by the singular
behavior of \(u_\varepsilon^{-1}\) near the point \(x_0\).

We now prove that \(Q_{g_\varepsilon}\) remains uniformly bounded in
\(L^p(\mathbb{S}^n,g_\varepsilon)\). Introducing polar coordinates
\(\theta := d_{\bar{g}}(x,x_0)\), one obtains
\begin{align*}
\int_{\mathbb{S}^n}
u_\varepsilon^{\frac{2n - p(n+4)}{n-4}}\, dV_{\bar{g}}
&= \int_{\mathbb{S}^n}
\bigl(1 - \varepsilon \eta_{\bar{g}}\bigr)^{\frac{2n - p(n+4)}{n-4}}
\, dV_{\bar{g}} \\
&\le C \int_0^\pi
\bigl(1 - \cos\theta\bigr)^{\frac{2n - p(n+4)}{n-4}}
\sin^{n-1}\theta\, d\theta \\
&\le C \int_0^\pi
\sin^{\frac{4n - 2p(n+4)}{n-4}}\!\Bigl(\frac{\theta}{2}\Bigr)
\sin^{n-1}\theta\, d\theta,
\end{align*}
which is finite under the imposed condition on \(n\) and \(p\).
A similar estimate shows that
\(u_\varepsilon^{-\frac{8}{n-4}} \) is  uniformly bounded  \(L^p(\mathbb{S}^n,g_\varepsilon)\).
Consequently, \(\|Q_{g_\varepsilon}\|_{L^p(\mathbb{S}^n,g_\varepsilon)}\)
remains uniformly bounded in \(L^p(\mathbb{S}^n,g_\varepsilon)\).

Nevertheless, the conformal factor \(u_\varepsilon\) collapses at \(x_0\),
despite bounded \(L^p\)-curvature and a uniform lower volume bound.
This shows that compactness fails without the assumption \(P_g(u)\ge 0\).

The non-negativity condition on the Paneitz operator is, therefore, a necessary hypothesis to prevent such degeneracy.
\end{proof}

\end{Remark}

\bibliographystyle{siam}
\bibliography{references}

@article{zbMATH03960440,
 author = {Branson, Thomas P.},
 title = {Differential operators canonically associated to a conformal structure},
 fjournal = {Mathematica Scandinavica},
 journal = {Math. Scand.},
 issn = {0025-5521},
 volume = {57},
 pages = {293--345},
 year = {1985},
 language = {English},
 doi = {10.7146/math.scand.a-12120},
 url = {https://eudml.org/doc/166957},
 zbMATH = {3960440},
 Zbl = {0596.53009}
}

@article{zbMATH00027724,
  author  = {Li, Peter and Yau, Shing-Tung},
  title  =  {A New Conformal Invariant and Its Applications to the {W}illmore Conjecture and the First Eigenvalue of Compact Surfaces},
  journal = {Invent. Math.},
  volume  = {69},
  number  = {2},
  pages   = {269--291},
  year    = {1982}
}

@book{zbMATH01554166,
 author = {Gilbarg, David and Trudinger, Neil S.},
 title = {Elliptic partial differential equations of second order},
 edition = {Reprint of the 1998 ed.},
 fseries = {Classics in Mathematics},
 series = {Class. Math.},
 issn = {1431-0821},
 isbn = {3-540-41160-7},
 year = {2001},
 publisher = {Berlin: Springer},
 language = {English},
 zbMATH = {1554166},
 Zbl = {1042.35002}
}

@article{zbMATH04030435,
 author = {Lee, John M. and Parker, Thomas H.},
 title = {The {Yamabe} problem},
 fjournal = {Bulletin of the American Mathematical Society. New Series},
 journal = {Bull. Am. Math. Soc., New Ser.},
 issn = {0273-0979},
 volume = {17},
 pages = {37--91},
 year = {1987},
 language = {English},
 doi = {10.1090/S0273-0979-1987-15514-5},
 zbMATH = {4030435},
 Zbl = {0633.53062}
}

@article{gursky2015strong,
  title={A strong maximum principle for the Paneitz operator and a non-local flow for the {Q}-curvature},
  author={Gursky, Matthew J and Malchiodi, Andrea},
  journal={Journal of the European Mathematical Society (EMS Publishing)},
  volume={17},
  number={9},
  year={2015}
}

@article{zbMATH00558966,
 author = {Gursky, Matthew J.},
 title = {Compactness of conformal metrics with integral bounds on curvature},
 fjournal = {Duke Mathematical Journal},
 journal = {Duke Math. J.},
 issn = {0012-7094},
 volume = {72},
 number = {2},
 pages = {339--367},
 year = {1993},
 language = {English},
 doi = {10.1215/S0012-7094-93-07212-2},
 url = {thesis.library.caltech.edu/2650/9/Gursky_mj_1991.pdf},
 zbMATH = {558966},
 Zbl = {0809.53040}
}

@article{zbMATH06774840,
 author = {Matthiesen, Henrik},
 title = {Regularity of conformal metrics with large first eigenvalue},
 fjournal = {Annales de la Facult{\'e} des Sciences de Toulouse. Math{\'e}matiques. S{\'e}rie VI},
 journal = {Ann. Fac. Sci. Toulouse, Math. (6)},
 issn = {0240-2963},
 volume = {25},
 number = {5},
 pages = {1079--1094},
 year = {2016},
 language = {English},
 doi = {10.5802/afst.1523},
 url = {semanticscholar.org/paper/1bef0a609635c287aa9da92691bbe11692d9ae86},
 zbMATH = {6774840},
 Zbl = {1373.53052}
}

@book{zbMATH05835819,
 author = {de Saint-Gervais, Henri Paul},
 title = {Uniformisation des surfaces de {Riemann}. {Retour} sur un th{\'e}or{\`e}me centenaire},
 isbn = {978-2-84788-233-9},
 year = {2010},
 publisher = {Lyon: ENS {\'E}ditions},
 language = {French},
 zbMATH = {5835819},
 Zbl = {1228.30001}
}

@article{zbMATH05503073,
 author = {Khuri, M. A. and Marques, F. C. and Schoen, R. M.},
 title = {A compactness theorem for the {Yamabe} problem},
 fjournal = {Journal of Differential Geometry},
 journal = {J. Differ. Geom.},
 issn = {0022-040X},
 volume = {81},
 number = {1},
 pages = {143--196},
 year = {2009},
 language = {English},
 doi = {10.4310/jdg/1228400630},
 zbMATH = {5503073},
 Zbl = {1162.53029}
}

@article{zbMATH02220860,
 author = {Li, YanYan and Zhang, Lei},
 title = {Compactness of solutions to the {Yamabe} problem. {II}},
 fjournal = {Calculus of Variations and Partial Differential Equations},
 journal = {Calc. Var. Partial Differ. Equ.},
 issn = {0944-2669},
 volume = {24},
 number = {2},
 pages = {185--237},
 year = {2005},
 language = {English},
 doi = {10.1007/s00526-004-0320-7},
 zbMATH = {2220860},
 Zbl = {1229.35071}
}

@article{paneitz2008quartic,
   title={A Quartic Conformally Covariant Differential Operator for Arbitrary Pseudo-Riemannian Manifolds (Summary)},
   ISSN={1815-0659},
   url={http://dx.doi.org/10.3842/SIGMA.2008.036},
   DOI={10.3842/sigma.2008.036},
   journal={Symmetry, Integrability and Geometry: Methods and Applications},
   publisher={SIGMA (Symmetry, Integrability and Geometry: Methods and Application)},
   author={Paneitz, Stephen},
   year={2008},
   month=mar }

@article{yanyanli,
 author = {Li, YanYan and Xiong, Jingang},
 title = {Compactness of conformal metrics with constant {{\(Q\)}}-curvature. {I}},
 fjournal = {Advances in Mathematics},
 journal = {Adv. Math.},
 issn = {0001-8708},
 volume = {345},
 pages = {116--160},
 year = {2019},
 language = {English},
 doi = {10.1016/j.aim.2019.01.020},
 zbMATH = {7021539},
 Zbl = {1484.53075}
}

@article{Gangli,
 author = {Li, Gang},
 title = {A compactness theorem on {Branson}'s {{\(Q\)}}-curvature equation},
 fjournal = {Pacific Journal of Mathematics},
 journal = {Pac. J. Math.},
 issn = {1945-5844},
 volume = {302},
 number = {1},
 pages = {119--179},
 year = {2019},
 language = {English},
 doi = {10.2140/pjm.2019.302.119},
 zbMATH = {7178918},
 Zbl = {1434.53041}
}

@article{zbMATH05817575,
 author = {Brendle, Simon},
 title = {Blow-up phenomena for the {Yamabe} equation},
 fjournal = {Journal of the American Mathematical Society},
 journal = {J. Am. Math. Soc.},
 issn = {0894-0347},
 volume = {21},
 number = {4},
 pages = {951--979},
 year = {2008},
 language = {English},
 doi = {10.1090/S0894-0347-07-00575-9},
 zbMATH = {5817575},
 Zbl = {1206.53041}
}

@article{zbMATH05522838,
 author = {Brendle, Simon and Marques, Fernando C.},
 title = {Blow-up phenomena for the {Yamabe} equation. {II}},
 fjournal = {Journal of Differential Geometry},
 journal = {J. Differ. Geom.},
 issn = {0022-040X},
 volume = {81},
 number = {2},
 pages = {225--250},
 year = {2009},
 language = {English},
 doi = {10.4310/jdg/1231856261},
 zbMATH = {5522838},
 Zbl = {1166.53025}
}

@article{zbMATH02207680,
 author = {Druet, Olivier},
 title = {Compactness for {Yamabe} metrics in low dimensions},
 fjournal = {IMRN. International Mathematics Research Notices},
 journal = {Int. Math. Res. Not.},
 issn = {1073-7928},
 volume = {2004},
 number = {23},
 pages = {1143--1191},
 year = {2004},
 language = {English},
 doi = {10.1155/S1073792804133278},
 zbMATH = {2207680},
 Zbl = {1085.53029}
}

@article{zbMATH05033801,
 author = {Marques, Fernando Coda},
 title = {A priori estimates for the {Yamabe} problem in the non-locally conformally flat case},
 fjournal = {Journal of Differential Geometry},
 journal = {J. Differ. Geom.},
 issn = {0022-040X},
 volume = {71},
 number = {2},
 pages = {315--346},
 year = {2005},
 language = {English},
 doi = {10.4310/jdg/1143651772},
 zbMATH = {5033801},
 Zbl = {1101.53019}
}

@article{zbMATH05161231,
 author = {Li, Yanyan and Zhang, Lei},
 title = {Compactness of solutions to the {Yamabe} problem. {III}},
 fjournal = {Journal of Functional Analysis},
 journal = {J. Funct. Anal.},
 issn = {0022-1236},
 volume = {245},
 number = {2},
 pages = {438--474},
 year = {2007},
 language = {English},
 doi = {10.1016/j.jfa.2006.11.010},
 zbMATH = {5161231},
 Zbl = {1229.35072}
}

@misc{zbMATH03337135,
 author = {Cheeger, J.},
 title = {A lower bound for the smallest eigenvalue of the {Laplacian}},
 year = {1970},
 language = {English},
 howpublished = {Probl. {Analysis}, {Sympos}. in {Honor} of {Salomon} {Bochner}, {Princeton} {Univ}. 1969, 195-199 (1970).},
 zbMATH = {3337135},
 Zbl = {0212.44903}
}

@article{trudinger1968remarks,
  title={Remarks concerning the conformal deformation of Riemannian structures on compact manifolds},
  author={Trudinger, Neil S},
  journal={Annali della Scuola Normale Superiore di Pisa-Scienze Fisiche e Matematiche},
  volume={22},
  number={2},
  pages={265--274},
  year={1968}
}

\end{document}